\documentclass[11pt,letterpaper]{amsart}
\usepackage{fancyhdr}
\usepackage{bm}
\usepackage{hyperref}
\usepackage{graphicx} 
\usepackage{nicematrix}
\usepackage{amsthm,amssymb,amsmath}
\usepackage[T1]{fontenc}
\usepackage[utf8]{inputenc}
\usepackage{hyperref}
\usepackage{lipsum}
\usepackage{mathrsfs}
\usepackage{enumitem}
\usepackage{stmaryrd}
\usepackage{setspace}
\usepackage{yfonts}
\usepackage{float}
\usepackage{mathtools}
\usepackage{parskip}
\usepackage[all,cmtip]{xy}
\usepackage{tikz-cd}
\tikzcdset{row sep/normal=50pt, column sep/normal=50pt}

\newtheorem{lemma}{Lemma}[section]
\newtheorem{remark}[lemma]{Remark}
\newtheorem{theorem}[lemma]{Theorem}
\newtheorem{theorem*}{Theorem}

\newtheorem{example*}[lemma]{Example}
\newtheorem{proposition}[lemma]{Proposition}
\newtheorem{corollary}[lemma]{Corollary}
\newtheorem{claim}[lemma]{Claim}

\newtheorem{manualtheoreminner}{Theorem}
\newenvironment{manualtheorem}[1]{%
  \renewcommand\themanualtheoreminner{#1}%
  \manualtheoreminner
}{\endmanualtheoreminner}
\newtheorem{manualsupplementinner}{Supplement}
\newenvironment{manualsupplement}[1]{%
  \renewcommand\themanualsupplementinner{#1}%
  \manualsupplementinner
}{\endmanualsupplementinner}

\usepackage{blindtext}
\usepackage[margin=1.1in]{geometry}
\usepackage{blindtext}

\usepackage[backend=biber, style=numeric, sorting=nyt]{biblatex}
\title{Tangent bundles of Quot schemes of vector bundles on curves}
\author{\small{Ashima Bansal, Supravat Sarkar, Shivam Vats}}
\date{}
\begin{document}

\begin{abstract}
   For a vector bundle $E$ on a smooth projective curve $C$, one defines the Quot scheme $Q:=Q_d(E,C)$ parametrizing subsheaves of $E$ having length $d$ torsion quotients. We describe the indecomposable components of the tangent bundle $T_Q$ of $Q$, thus characterizing when $T_Q$ is indecomposable. We also characterize when $T_Q$ is simple; in fact, we completely describe the endomorphism rings of the indecomposable components of $T_Q$. As an application, we determine when two products of such Quot schemes can be isomorphic.
\end{abstract}
\maketitle
\begin{center}
\textbf{Keywords}: Quot scheme, tangent bundle
\end{center}
\begin{center}
\textbf{MSC Number: 14C05, 14F06} 
\end{center}

\section{Introduction}
We work throughout over the field $\mathbb{C}$ of complex numbers. Given a nonzero vector bundle $E$ of rank $r$ on a smooth projective curve $C$, and a positive integer $d$, $Q_d(E,C)$ is the Quot scheme parametrizing coherent subsheaves $K$ of $E$ such that $E/K$ is torsion of length $d$. This is a smooth projective variety of dimension $rd$. When $E$ is a line bundle, the Quot scheme is the symmetric power $C^{(d)}$, the quotient of $C^d$ under the action of the symmetric group $S_d$ by permuting the coordinates. When $d=1$, this Quot scheme is the projective bundle $\mathbb{P}_C(E)$.

In recent years there has been a lot of research about $Q:=Q_d(E,C)$. \cite{Infdefor1} and \cite{Infdefor2} study its deformation theory, \cite{GangopadhyaySebastianFundamental} studies its fundamental group scheme, \cite{gangopadhyay2019automorphisms},  \cite{Torelli} study its automorphisms and Torelli-type theorems and \cite{gangopadhyay2026birational} studies its birational geometry. The study of vector bundles, or more generally sheaves on $Q$ is also an active area of research.  \cite{GangopadhyaySebastianPicard} studies Picard groups and \cite{GangopadhyaySebastian} studies nef cones of $Q$, \cite{KrugExtension}, \cite{marian2026cohomology}, \cite{oprea2023euler} and \cite{gangopadhyay2018stability} study tautological bundles on $Q$ and \cite{marian2024derived} studies the derived category of $Q$.

A very natural and important vector bundle associated to any smooth projective variety is its tangent bundle. The study of the tangent bundle often reveals important geometric properties of the variety; Mori's work in \cite{mori1979projective} is a striking example. Tangent bundles of varieties have been studied in the literature from several viewpoints; see \cite{mehta1987varieties} or \cite{tian1992stability}, for example. The goal of this article is to study the tangent bundle of the Quot scheme $Q$. A nonzero vector bundle is called \textit{decomposable} if it is a direct sum of two nonzero vector bundles, and \textit{indecomposable} otherwise. By \cite[Theorem 3]{atiyah1956krull}, every vector bundle on a projective variety can be written as a direct sum of indecomposable vector bundles, which are uniquely determined up to isomorphism. As can be motivated from a representation-theoretic viewpoint, it is interesting to study these indecomposable components for interesting vector bundles. In \cite{SarkarTangentHilbert}, the indecomposable components of the tangent bundle of the Hilbert scheme of points on a smooth projective surface were completely described. In this paper, we solve the analogous problem for Quot schemes.

A nonzero vector bundle $E$ is called \textit{simple} if $End(E)=\mathbb{C}\cdot id$. Simple vector bundles are indecomposable, but the converse is not true. For each indecomposable component of $T_Q$, we also determine when it is simple, and completely describe its endomorphism algebra when it is not simple.
\begin{manualtheorem}{A}\label{A}
Let $E$ be a nonzero vector bundle on a smooth projective curve $C$ of genus $g$. Also, let $d\geq 2$ be an integer and $Q=Q_d(E,C)$. Then the following hold:
\begin{enumerate}
    \item[(a)] If $g=0$, then $T_Q$ is simple.
    
    \item[(b)] If $g=1$ and $E$ is not semihomogeneous, then $T_Q$ is indecomposable but not simple.
    
    \item[(c)] If $g=1$ and $E$ is semihomogeneous, then the Albanese map $    a:Q\longrightarrow \operatorname{Alb}(Q)\cong C$
    is smooth, $T_a$ is simple, and $    T_Q\cong \mathcal{O}_Q\oplus T_a.$ 
    
    \item[(d)] If $g\geq 2$ and $E$ is not simple, then $T_Q$ is indecomposable but not simple.
    
    \item[(e)] If $g\geq 2$ and $E$ is simple, then $T_Q$ is simple.
\end{enumerate}
Moreover, in (b) and (d), we have a $\mathbb{C}$-algebra isomorphism
\[
\operatorname{End}(T_Q)
\cong
\mathbb{C}\cdot\operatorname{id}
\oplus
\left(H^0(C,\omega_C)\otimes H^0(C,\operatorname{ad}E)\right),
\]
where the second summand is a square-zero ideal.
\end{manualtheorem}
Here $\omega_C$ is the canonical bundle of $C$, and $\operatorname{ad}E=\operatorname{\mathcal{E}nd}(E)/\mathcal{O}_C,$ where $\mathcal{O}_C\subset \operatorname{\mathcal{E}nd}(E)$ is induced by the identity endomorphism. Note that we include the case when $E$ is a line bundle, thus we are also describe the indecomposable components of the tangent bundle of the symmetric power $C^{(d)}$. This special case is stated in a more concise form in Proposition \ref{r=1}.

For $d=1$, the Quot scheme is the projective bundle $\mathbb{P}_C(E)$, and behaviour of tangent bundle is different in this case.
\begin{manualsupplement}{A}\label{PE}
Let $E$ be a vector bundle of rank $\geq 2$ on a smooth projective curve $C$. Let $Q=\mathbb P_C(E)$ and $Q\xrightarrow{\varphi}C$ be the projection. Then:

\begin{enumerate}
    \item[(a)] $T_Q$ is decomposable if and only if all indecomposable components of $E$ have the same slope. In this case, $T_Q\cong T_{\varphi}\oplus\varphi^*T_C$, and both summands are simple.

    \item[(b)] If $T_Q$ is indecomposable, then
    \[
    \operatorname{End}(T_Q)
    \cong
    \mathbb C\cdot\operatorname{id}
    \oplus H^0(C,\omega_C\otimes\operatorname{ad}(E)),
    \]
    where the second summand is a square-zero ideal.

    \item[(c)] $T_Q$ is simple if and only if $C\cong\mathbb P^1$, and $E$ is balanced but not trivial up to line bundle twist.
\end{enumerate}
    Here a vector bundle $E\cong \oplus_i\mathcal{O}_{\mathbb{P}^1}(a_i)$ on $\mathbb{P}^1$ is called \textit{balanced} if $|a_i-a_j|\leq 1$ for all $i$ and $j$.
\end{manualsupplement}  
When $C$ is an elliptic curve with origin $O$, one can define the Kummer-Quot scheme as in \cite[Section 5]{Torelli} to be the fibre over $O$ of the Albanese map $Q_d(E,C)\to C$. We also prove an analogous result for the tangent bundle of the Kummer-Quot scheme.

\begin{manualtheorem}{A$'$}\label{T_Q'}
Let $E$ be a nonzero vector bundle on an elliptic curve $C$
with origin $O$. Also, let $d\geq 2$ be an integer and $Q'=Q_d'(E,C).$
Then the following hold:

\begin{enumerate}
    \item[(a)] If $d\geq 3$, then $T_{Q'}$ is simple.

    \item[(b)] If $d=2$, then $T_{Q'}$ is indecomposable, and
    \[
    \operatorname{End}(T_{Q'})
    \cong
    \mathbb C\cdot\operatorname{id}
    \oplus
    \operatorname{Hom}_C(E(4\cdot O),E),
    \]
    where the second summand is a square-zero ideal.
\end{enumerate}
\end{manualtheorem}
Here we assume $d\geq 2$ as for $d=1$, $Q'$ is a projective space, so $T_{Q'}$ is simple when it is nonzero.

As an application of our results, we describe when two products of Quot schemes can be isomorphic.
\begin{manualtheorem}{B}\label{B}
Let $k$ and $l$ be positive integers. For each $1\leq i\leq k$, let $d_i\geq 2$ be an integer, and $E_i$ a vector bundle of rank $\geq 2$ on a smooth projective curve $C_i$. Similarly, for each $1\leq j\leq l$, let $d_j'\geq 2$ be an integer, and $E_j'$ a vector bundle of rank $\geq 2$ on a smooth projective curve $C_j'$. Suppose
\[
\prod_{i=1}^k Q_{d_i}(E_i,C_i)\cong \prod_{j=1}^l Q_{d_j'}(E_j',C_j').
\]
Then $k=l$, and up to renumbering, we have for each $i$, $d_i=d_i'$, and $E_i\cong h_i^*E_i'\otimes L_i$ for an isomorphism $h_i:C_i\to C_i'$ and a line bundle $L_i$ on $C_i$.
\end{manualtheorem}
In \cite[Theorem B and C]{SarkarTangentHilbert}, similar results were obtained for symmetric powers and Hilbert schemes of points on a smooth projective surface. 

At the end of \S 5, we discuss a few general results from which one can completely describe automorphisms of certain products of Quot schemes, as another application of Theorem \ref{A}.
\section{Notation and conventions}
\begin{itemize}
\item For a commutative ring $R$ and a positive integer $d$, the ring structure on $R^{\oplus d}$ will be assumed to be the product ring structure, unless otherwise stated.
\item For a group $G$ acting on a ring $R$, the fixed ring $R^G$ consists of all $x\in R$ fixed by $G$.
\item For a projective variety $X$ we denote the identity component of the automorphism group scheme of $X$ by $\operatorname{Aut}^{0}(X)$.
\item For a group homomorphism $N:G\to H$, we denote the kernel and image of $N$ by $\ker N$ and $\operatorname{im}N$, respectively.
\item Let $C$ be an elliptic curve. A vector bundle $E$ on $C$ is said to be \textit{semihomogeneous} if for all $x\in C$, there is a line bundle $L_x$ on $C$ such that $t_x^*E\cong E\otimes L_x,$
where $t_x$ is the translation by $x$. This is equivalent to the assertion that the natural map $\operatorname{Aut}^0(\mathbb P_C(E))
\longrightarrow
\operatorname{Aut}^0(C)$ is surjective. In fact, $E$ is semihomogeneous if and only if $E$ is semistable; see
\cite[Theorem~2 and the discussion on page~2]{mehta1984semistable}.
\item A \textit{torsion bundle over a torus} is a variety which is the total space of a fibre bundle with finite structure group over a positive dimensional abelian variety. See \cite{horst1985decomposition} for more details. If $W$ is a projective variety that is a torsion bundle over a torus, then $W$ can be written as $(A\times X)/G$, where $G$ is a finite subgroup of a positive dimensional abelian variety $A$, with an action of $G$ on $X$, so that $G$ acts on $A\times X$ diagonally (see \cite[page~2]{ballico2003real}). Since $A/G$ is an abelian variety, \cite[Lemma~6.2]{SarkarTangentHilbert} shows that $\mathcal O_W$ is a direct summand of $T_W$. Also, the action of $A$ on $A\times X$ via translation in the $A$-factor descends to an action of $A$ on $W$. Thus, $\operatorname{Aut}^{0}(W)$ contains a positive dimensional abelian variety.
\item A positive dimensional projective variety is called \textit{decomposable} if it is isomorphic to the product of two positive dimensional projective varieties, and \textit{indecomposable} otherwise.
\item For a smooth variety $X$, its tangent bundle will be denoted by $T_X$. For a morphism $f:X\to Y$ of smooth varieties, the kernel of $df:T_X\to f^*T_Y$, will be denoted by $T_f.$ This is the so-called relative tangent bundle when $f$ is smooth.
\item A proper surjective morphism between normal varieties $f:X\to Y$ is called a \textit{contraction} if $f_*\mathcal{O}_X=\mathcal{O}_Y$. This is equivalent to the assertion that $f$ has connected fibres.
\item For a smooth projective curve $C$ and a positive integer $d$, $\Sigma_d(C) \hookrightarrow C \times C^{(d)}$
denotes the universal divisor, consisting of pairs $(x,D)$ with $x\leq D$. Let $\Sigma_d(C) \xrightarrow{q} C$
and 
$\Sigma_d(C) \xrightarrow{p} C^{(d)}$ be the projections. Given a vector bundle $E$ on $C$, we define the secant bundle $E^{[d]}:=p_*q^*E,$
which is a vector bundle on $C^{(d)}$, as $p$ is finite flat.
\item For an elliptic curve $C$ with origin $O$ and a positive integer $d$, the sum map $C^{(d)}\xlongrightarrow{b} C$ is a $\mathbb{P}^{d-1}$-bundle. We let $C^{((d))}\cong \mathbb{P}^{d-1}$ to be the fibre of $b$ over $O$. Let $\Sigma'_d(C) \subset C\times C^{((d))}$
be the intersection of $\Sigma_d(C)$ with $C\times C^{((d))}$, and $\Sigma'_d(C) \xrightarrow{q'} C,$
$\Sigma'_d(C) \xrightarrow{p'} C^{((d))}$
be the projections. For a vector bundle $E$ on $C$, define a vector bundle $E^{[[d]]}$ on
$C^{((d))}$ by $E^{[[d]]}=p'_*(q')^*E.$ See \cite[Section 5]{Torelli} for more details.
\item For a nonzero vector bundle $E$ on a variety $X$, let $\operatorname{ad}(E)
:=\operatorname{\mathcal{E}nd}(E)/\mathcal{O}_X,$ where $\mathcal{O}_X\subset \operatorname{\mathcal{E}nd}(E)$ is induced by the identity endomorphism.
 This $\operatorname{ad}(E)$ is isomorphic to the vector bundle $\mathfrak{sl}(E)$ of trace zero endomorphisms, and we have 
\begin{equation}\label{ad E}
    \operatorname{\mathcal{E}nd}(E)\cong \mathcal{O}_X\oplus \operatorname{ad}(E),
\end{equation} see \cite[Page 4]{Sarkar2026Automorphisms}.
\end{itemize}
\section{Some preliminary results}
In this section, we prove a few results, that will be needed in the proof of the main results of the paper.

\begin{lemma}\label{simple}
We have the following:
\begin{enumerate}
    \item $T_{\mathbb P^n}$ is simple for all $n\geq 1$.

    \item Let $    f:X\longrightarrow Y$
    be a smooth contraction of smooth projective varieties. Suppose
    $T_{f^{-1}(y)}$ is simple for every $y\in Y$. Then $T_f$ is simple.
\end{enumerate}
\end{lemma}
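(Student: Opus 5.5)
For (1), I would use the Euler sequence $0\to\mathcal O\to\mathcal O(1)^{\oplus(n+1)}\to T_{\mathbb P^n}\to 0$ on $\mathbb P^n=\mathbb P(V)$, $\dim V=n+1$. The plan is to compute $H^0(T_{\mathbb P^n}\otimes\Omega_{\mathbb P^n})$ and show it is one-dimensional.

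The route is to apply $\operatorname{Hom}(-,T_{\mathbb P^n})$ to the Euler sequence. This gives
\[
0\to \operatorname{Hom}(T,T)\to \operatorname{Hom}(\mathcal O(1)^{n+1},T)\to H^0(T)\to \cdots .
\]
Twisting the Euler sequence by $\mathcal O(-1)$ gives $0\to\mathcal O(-1)\to\mathcal O^{n+1}\to T(-1)\to0$. Since $H^0$ and $H^1$ of $\mathcal O(-1)$ vanish, we get $H^0(T(-1))\cong V$, so $\operatorname{Hom}(\mathcal O(1)^{n+1},T)\cong V^\vee\otimes V=\mathfrak{gl}(V)$. The map to $H^0(T)=\mathfrak{gl}(V)/\mathbb C$ is the natural surjection, so its kernel $\operatorname{End}(T)$ is one-dimensional, hence equal to $\mathbb C\cdot\operatorname{id}$.

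Alternatively, $T_{\mathbb P^n}$ is stable, and stable bundles are simple. I would present the cohomological computation, since it is self-contained.

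For (2), take $\phi\in\operatorname{End}(T_f)$. Since $f$ is smooth, for each $y\in Y$ with fibre $F_y=f^{-1}(y)$ we have $T_f|_{F_y}\cong T_{F_y}$. By hypothesis $\phi|_{F_y}=\lambda(y)\,\operatorname{id}$ for a scalar $\lambda(y)$; here $F_y$ is connected because $f$ is a contraction.

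Next I would show that $\lambda$ is a regular function on $Y$. Consider the trace $\operatorname{tr}(\phi)\in H^0(X,\mathcal O_X)=\mathbb C$. More usefully, $\operatorname{tr}\phi/\operatorname{rk}T_f$ is a global function on $X$, and $X$ is projective and connected (since $f_*\mathcal O_X=\mathcal O_Y$ and $Y$ is a variety). Hence it is a constant $c$. Its restriction to $F_y$ equals $\lambda(y)$, so $\lambda\equiv c$.

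Then $\psi=\phi-c\cdot\operatorname{id}$ restricts to zero on every fibre. A map of vector bundles on the reduced variety $X$ that vanishes at every point is zero, because it vanishes in every fibre $T_f\otimes k(x)$ and $X$ is reduced. Therefore $\phi=c\cdot\operatorname{id}$.

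The only subtlety is the degenerate case $\operatorname{rk}T_f=0$ (relative dimension zero). There $T_f=0$ is not simple, but the hypothesis that $T_{F_y}$ is simple (in particular nonzero) excludes it.

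I expect no genuine obstacle. The step that needs care is identifying $T_f|_{F_y}$ with $T_{F_y}$, which follows from smoothness via the exact sequence $0\to T_f\to T_X\to f^*T_Y\to0$ restricted to the fibre. In (1), the step to check is that the connecting map $\mathfrak{gl}(V)\to H^0(T)$ really is the standard quotient.
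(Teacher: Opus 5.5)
Your proposal is correct, but it argues differently from the paper in both parts.

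For (1), the paper simply cites Okonek--Schneider--Spindler, while you give a self-contained Euler-sequence computation. The step you flag does hold. Every map $V\otimes\mathcal O(1)\to T_{\mathbb P^n}$ is $A\otimes 1$ followed by the Euler surjection, for a unique $A\in\operatorname{End}(V)$. Precomposing with the Euler section then gives the class of $A$ in $H^0(T_{\mathbb P^n})=\operatorname{End}(V)/\mathbb C$. Alternatively, surjectivity of $\operatorname{Hom}(\mathcal O(1)^{n+1},T_{\mathbb P^n})\to H^0(T_{\mathbb P^n})$ follows from $\operatorname{Ext}^1(T_{\mathbb P^n},T_{\mathbb P^n})=0$.

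For (2), the paper works on the base $Y$. Since $h^0(F_y,\operatorname{\mathcal{E}nd}(T_{F_y}))=1$ for every $y$, Grauert's theorem makes $f_*\operatorname{\mathcal{E}nd}(T_f)$ a line bundle with fibre $\operatorname{End}(T_{F_y})$ at $y$. The identity then gives a fibrewise isomorphism $\mathcal O_Y\to f_*\operatorname{\mathcal{E}nd}(T_f)$, so $\operatorname{End}(T_f)=H^0(Y,\mathcal O_Y)=\mathbb C$.

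Your trace argument avoids cohomology and base change entirely. It uses only the fibrewise scalars, $H^0(X,\mathcal O_X)=\mathbb C$, and the reducedness of $X$ (plus smoothness of $f$ to identify $T_f|_{F_y}\cong T_{F_y}$). So it is more elementary and works directly on $X$.

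What the paper's route gives in addition is the sheaf-level identification $f_*\operatorname{\mathcal{E}nd}(T_f)\cong\mathcal O_Y$, valid over any open subset of $Y$. The paper reuses this fibrewise-plus-base-change pattern later, over the non-proper base $\widetilde U$, to show that $\operatorname{End}(T_{\varphi_0})$ has no nontrivial idempotent.
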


\begin{proof}
(1) is \cite[Lemma~4.1.2, Chapter~1, Section~4.1]{okonek1980vector}.
Let us prove (2). By Grauert's theorem, $f_*\operatorname{\mathcal End}(T_f)$
is a line bundle on $Y$, with $\operatorname{End}(T_{f^{-1}(y)})$
being the fibre over $y\in Y$. The natural map
\[
\mathcal O_Y\longrightarrow f_*\operatorname{\mathcal End}(T_f)
\]
given by the identity endomorphism is an isomorphism on each fibre, hence is
an isomorphism. Now taking $H^0$, we get the result.
\end{proof}
The following is the special case of Theorem \ref{A} where $\operatorname{rank }E=1.$
\begin{proposition}\label{r=1}
Let $C$ be a smooth projective curve of genus $g$, and $d\geq 2$ an integer. Then the following hold:
\begin{enumerate}
    \item If $g=1$, then the sum map $b:C^{(d)}\longrightarrow C$
is smooth, $T_b$ is simple, and $T_{C^{(d)}}\cong \mathcal{O}_{C^{(d)}}\oplus T_b$.
    
    \item In all other cases, $T_{C^{(d)}}$ is simple.
\end{enumerate}
\end{proposition}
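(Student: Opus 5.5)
Work $S_d$-equivariantly on $C^d$. Let $\pi:C^d\to C^{(d)}$ be the quotient map. Over the open set $U\subset C^{(d)}$ of reduced divisors, $\pi$ is étale and $\pi^*T_{C^{(d)}}|_U\cong\bigoplus_{i=1}^d \mathrm{pr}_i^*T_C$. The complement of $U$ is the big diagonal, a divisor, so I will not restrict to $U$. Instead I use the standard fact that $\pi_*$ of the invariant part recovers $T_{C^{(d)}}$ away from codimension two. More precisely, for reflexive sheaves one has
\[
T_{C^{(d)}}\cong (\pi_*T_{C^d})^{S_d}
\]
up to checking along the diagonal. Alternatively, and more robustly, one can use the description $T_{C^{(d)}}\cong p_*(\mathcal O_{\Sigma_d}(\Sigma_d))$. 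I would try the following instead. Every endomorphism $\phi$ of $T_{C^{(d)}}$ restricts to $U$. Pulling back to $\pi^{-1}(U)$ gives an $S_d$-equivariant endomorphism of $\bigoplus_i \mathrm{pr}_i^*T_C$ on $\pi^{-1}(U)$. This extends to all of $C^d$ as a rational map, and Hartogs-type arguments apply where the codimension of the complement of $\pi^{-1}(U)$ is at least $2$. That is false here, since the diagonals are divisors, so the real content is controlling poles along diagonals.

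So the plan is concrete. Write $\pi^*\phi$ as a matrix $(\phi_{ij})$ with $\phi_{ij}\in H^0(\pi^{-1}U,\mathrm{pr}_i^*\omega_C\otimes \mathrm{pr}_j^*T_C)$, possibly with poles along the diagonals $\Delta_{kl}$. I would compute the local behaviour near a generic point of $\Delta_{12}$. Use local coordinates $x_1,x_2$, and on the quotient $s=x_1+x_2$, $t=(x_1-x_2)^2$. The pullback of $\partial_t$ is $(\partial_{x_1}-\partial_{x_2})/(2(x_1-x_2))$. A regular endomorphism of $T_{C^{(d)}}$ therefore corresponds to an equivariant matrix whose off-diagonal entries are allowed at most a specific pole and whose antisymmetric part must vanish appropriately. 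Expanding gives the following conditions: the diagonal entries $\phi_{ii}$ are regular sections of $\mathcal O$ on the $i$-th factor, hence constant in $x_i$; and the off-diagonal entries $\phi_{ij}$ are regular sections of $\mathrm{pr}_i^*\omega_C\otimes\mathrm{pr}_j^*T_C$, with at worst simple poles along $\Delta_{ij}$ that are constrained by symmetry.

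By $S_d$-equivariance and compactness, the diagonal entries are all equal to a constant $c$. For the off-diagonal entries, I expect that $\phi_{ij}$ is a global section on $C\times C$ of $\omega_C\boxtimes T_C(k\Delta)$ for some small $k$, satisfying a residue condition. I expect the local analysis to force $\phi_{ij}$ to be of the form $\alpha(x_i)\otimes v(x_j)$ plus a diagonal correction, where $\alpha\in H^0(\omega_C)$ and $v\in H^0(T_C)$. For $g\ge2$ we have $H^0(T_C)=0$, and for $g=0$ we have $H^0(\omega_C)=0$, so only $c\cdot\mathrm{id}$ survives and the bundle is simple. For $g=1$, $\omega_C$ and $T_C$ are both trivial. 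The extra endomorphism is the projector $\tfrac1d(\text{all-ones matrix})$, and it corresponds exactly to the splitting $T_{C^{(d)}}\cong \mathcal O\oplus T_b$ via $db$ and the translation action.

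For $g=1$ I also argue geometrically. The sum map $b$ is a $\mathbb P^{d-1}$-bundle, hence smooth. Translations of $C$ act on $C^{(d)}$ and give a nowhere-vanishing vector field, which provides a section $\mathcal O\to T_{C^{(d)}}$ that $db$ maps isomorphically onto $b^*T_C\cong\mathcal O$. Hence $T_{C^{(d)}}\cong\mathcal O\oplus T_b$. Each fibre of $b$ is $\mathbb P^{d-1}$ and $b$ is a contraction, so by Lemma \ref{simple}(1) and (2), $T_b$ is simple.

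The main obstacle is the pole analysis along the diagonals: showing that an off-diagonal entry must be a product $\alpha\otimes v$ of global sections rather than something with genuine poles along $\Delta$. I would first attempt this using the identification $T_{C^{(d)}}|_D\cong H^0(\mathcal O_D(D))$ at a point $D$, together with the exact sequence $0\to\mathcal O\to\mathcal O(D)\to\mathcal O_D(D)\to0$ on $C$, which makes the relevant $\phi_{ij}$ global sections of $\omega_C\boxtimes T_C$ on $C\times C$.
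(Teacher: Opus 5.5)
\textbf{Verdict.} Your treatment of $g=1$ is correct, but for $g\ge 2$ there is a genuine gap: the diagonal pole analysis, which is the whole content, is not carried out, and the local conditions you sketch for it are wrong.

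\textbf{The cases $g=1$ and $g=0$.} For $g=1$ your argument is essentially the paper's. The paper quotes \cite[Lemma 5.1]{Torelli} with $E=\mathcal O_C$ for the smoothness of $b$ and the splitting $T_{C^{(d)}}\cong\mathcal O\oplus T_b$. It then applies Lemma \ref{simple} to the $\mathbb P^{d-1}$-bundle $b$ exactly as you do. Your translation vector field, whose image under $db$ is $d$ times a nowhere-vanishing vector field on $C$, gives the splitting directly. For $g=0$ no diagonal analysis is needed: $C^{(d)}\cong\mathbb P^d$ and Lemma \ref{simple}(1) applies.

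\textbf{The local conditions are wrong.} Near a general point of $\Delta_{12}$, put $u=x_1-x_2$, $s=x_1+x_2$, $t=u^2$. Then $d\pi(\partial_{x_1})=\partial_s+2u\,\partial_t$ and $d\pi(\partial_{x_2})=\partial_s-2u\,\partial_t$. Take the local endomorphism $\phi$ with $\phi(\partial_s)=\partial_t$ and $\phi=0$ on the other coordinate fields. It transports to $\tilde\phi=d\pi^{-1}\circ\pi^*\phi\circ d\pi$ with
\[
\tilde\phi(\partial_{x_1})=\tilde\phi(\partial_{x_2})=\frac{\partial_{x_1}-\partial_{x_2}}{4u},
\]
so the diagonal entries have simple poles. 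Similarly, $\phi(\partial_{x_3})=\partial_t$ gives the $\partial_{x_1}$-component of $\tilde\phi(\partial_{x_3})$ a pole along $\Delta_{12}$. So poles are not confined to the entries indexed by $\{i,j\}$.

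What the local analysis actually gives is weaker. It says only that $\tilde\phi$ is an $S_d$-invariant section of $\operatorname{\mathcal{E}nd}(T_{C^d})(\sum_{i<j}\Delta_{ij})$. Its polar part along $\Delta_{ij}$ has the form $(x_i-x_j)^{-1}(\partial_{x_i}-\partial_{x_j})\otimes\beta$ with $\beta(\partial_{x_i}-\partial_{x_j})=0$.

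\textbf{The global problem is not addressed.} Showing that such sections are scalars for $g\ge2$ is a genuinely global problem. For $d=2$ it is easy, since $h^0(C\times C,\mathcal O(\Delta))=1$ and $h^0(C\times C,\omega_C\boxtimes T_C(\Delta))=0$ when $g\ge 2$. Once $d-1>2g-2$, however, the relevant line bundles have positive degree on horizontal curves (e.g.\ $T_C(x_2+\dots+x_d)$ in the $x_1$-direction). So no degree argument kills the polar terms. Your closing remark about $T_{C^{(d)}}|_D\cong H^0(\mathcal O_D(D))$ is not an argument. The expected form ``$\alpha(x_i)\otimes v(x_j)$ plus a diagonal correction'' is precisely what remains unproved.

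\textbf{What the paper does instead.} It avoids all of this by using $\Omega_{C^{(d)}}\cong\omega_C^{[d]}$. It then applies Krug's simplicity theorem for tautological bundles \cite[Theorem 1.1]{KrugExtension} to the line bundle $\omega_C$, which is nontrivial for $g\ge2$, and dualizes. Your predicted answer agrees with the truth in every genus. As written, though, the proposal does not prove part (2) for $g\ge 2$.
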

\begin{proof}
\underline{$(1):$} By \cite[Lemma 5.1]{Torelli} applied to $E=\mathcal{O}_C$, we only need to show $T_b$ is simple. This follows as the restriction of $T_b$ to a fibre of $b$ is $T_{\mathbb{P}^{d-1}}$, which is simple.

\underline{$(2):$} If $g(C)=0$, then we are done as $C^{(d)}\cong\mathbb{P}^d$ and $T_{\mathbb{P}^d}$ is simple.
Now assume $g(C)\geq 2$. By \cite[Exercise 3.22]{harris1998moduli}, $\Omega_{C^{(d)}}\cong \omega_C^{[d]}.$
Now $\omega_C$ is a nontrivial line bundle, in particular simple. So by \cite[Theorem 1.1]{KrugExtension}, $\omega_C^{[d]}$
is simple. Thus, $\Omega_{C^{(d)}}$ is simple, and hence so is its dual $T_{C^{(d)}}$.
\end{proof}
\begin{lemma}\label{Sx}
Let $C$ be a smooth projective curve, $d\geq 2$ an integer, and $S=C^{(d)}$. For $x\in C$, let
\[
S_x=\{[D]\in S\mid x\leq D\}.
\]
Then the natural map $H^0(S,\Omega_S)\longrightarrow H^0(S_x,\Omega_S|_{S_x})$
is an isomorphism.
\end{lemma}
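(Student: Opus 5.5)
The plan is to identify $S_x$ with $C^{(d-1)}$ through the closed embedding
\[
i_x:C^{(d-1)}\longrightarrow C^{(d)},\qquad D'\longmapsto D'+x,
\]
and to use the conormal sequence of the smooth divisor $S_x\subset S$:
\[
0\longrightarrow N^\vee_{S_x/S}\longrightarrow \Omega_S|_{S_x}\longrightarrow \Omega_{S_x}\longrightarrow 0 .
\]
Taking $H^0$ gives
\[
0\to H^0(S_x,N^\vee_{S_x/S})\to H^0(S_x,\Omega_S|_{S_x})\to H^0(S_x,\Omega_{S_x}).
\]
The map in the lemma, followed by the last arrow, is the pullback
\[
i_x^*:H^0(C^{(d)},\Omega)\to H^0(C^{(d-1)},\Omega).
\]
It therefore suffices to prove two things: (i) $i_x^*$ is an isomorphism on global $1$-forms, and (ii) $H^0(S_x,N^\vee_{S_x/S})=0$. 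Given both, the restriction map of the lemma is injective, because its composite with the last arrow is $i_x^*$. It is surjective because the last arrow is injective by (ii) and the composite is surjective by (i).

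For (i), I would use the standard description $H^0(C^{(n)},\Omega)\cong H^0(C,\omega_C)$ for every $n\ge 1$. It comes from pulling back to $C^n$, where the forms are $\sum_j \mathrm{pr}_j^*\alpha$ for $\alpha\in H^0(C,\omega_C)$, and taking $S_n$-invariants; these invariant forms descend. Compatibly, $i_x$ lifts to $C^{d-1}\to C^d$, $(y_1,\dots,y_{d-1})\mapsto(y_1,\dots,y_{d-1},x)$. This lift pulls $\sum_{j=1}^d\mathrm{pr}_j^*\alpha$ back to $\sum_{j=1}^{d-1}\mathrm{pr}_j^*\alpha$, since $\mathrm{pr}_d$ becomes constant. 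Hence $i_x^*$ corresponds to the identity of $H^0(C,\omega_C)$, and here $d-1\ge1$ is used. Alternatively one may quote $\Omega_{C^{(n)}}\cong\omega_C^{[n]}$, as in the proof of Proposition \ref{r=1}, together with $H^0(\omega_C^{[n]})=H^0(\omega_C)$.

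For (ii), which I expect to be the main point, I would compute the normal bundle $N:=\mathcal O_S(S_x)|_{S_x}$. The divisors $S_y$ for $y\in C$ are all algebraically equivalent to $S_x$, so $N$ is algebraically equivalent to $i_x^*\mathcal O_S(S_y)$ for any $y\neq x$. Moreover $i_x^{-1}(S_y)=\{D' : y\le D'\}$, which is the divisor $y+C^{(d-2)}\subset C^{(d-1)}$ (a point when $d=2$).
\begin{itemize}
\item For $d=2$: $N$ has degree $1$ on $S_x\cong C$, so $N^\vee$ has degree $-1$ and no sections.
\item For $d\ge3$: the divisor $y+C^{(d-2)}$ on $C^{(d-1)}$ is ample. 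It is the pullback of a point under the finite map to $C$-type coordinates; more precisely, it is the well-known ample class $x$ on symmetric products. So $N$ is numerically ample, $N^\vee$ has negative degree on every curve, and $H^0(N^\vee)=0$.
\end{itemize}
If quoting ampleness is undesirable, it suffices to use a weaker fact: $N^\vee$ has negative degree on the curve $\{D_0+t : t\in C\}\subset C^{(d-1)}$. Here $D_0$ is a general divisor of degree $d-2$, and this curve meets $y+C^{(d-2)}$ positively. Since that curve moves in a covering family, $N^\vee$ has no nonzero sections. This completes the argument.
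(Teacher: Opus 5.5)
Your proof is correct and follows the paper's argument: the conormal sequence of $S_x\subset S$, the vanishing of $H^0(S_x,\mathcal O_S(-S_x)|_{S_x})$ by ampleness, the fact that $D\mapsto D+x$ induces an isomorphism on global $1$-forms, and the same diagram chase. The differences lie only in how you justify the two inputs (the paper gets (i) from the commutative square of Abel--Jacobi maps and (ii) by quoting ampleness of $S_x$ in $S$ from ACGH); also, your phrase ``the pullback of a point under the finite map'' should instead say that the pullback of $y+C^{(d-2)}$ to $C^{d-1}$ is $\sum_i \mathrm{pr}_i^*\mathcal O_C(y)$, which is ample, and ampleness descends along the finite surjection $C^{d-1}\to C^{(d-1)}$.
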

\begin{proof}
We have the conormal exact sequence
\begin{equation}\label{conormal}
0\longrightarrow \mathcal{O}_{S}(-S_x)|_{S_x}
\longrightarrow \Omega_S|_{S_x}
\longrightarrow \Omega_{S_x}\longrightarrow 0.
\end{equation}
By \cite[Chapter VII, Proposition 2.2]{ACGH}, $S_x$ is an ample divisor in $S$. So $\mathcal{O}_{S}(-S_x)|_{S_x}$ is anti-ample; in particular, $H^0(S_x,\mathcal{O}_{S}(-S_x)|_{S_x})=0.$
By \eqref{conormal}, we have an injection
\[
H^0(S_x,\Omega_S|_{S_x})\hookrightarrow H^0(S_x,\Omega_{S_x}).
\]

The map $C^{(d-1)}\xhookrightarrow{+x}C^{(d)}$
given by $D\mapsto D+x$ has image $S_x$, and we have a commutative diagram
\[
\begin{tikzcd}
C^{(d-1)}
  \arrow[r,hook, "{+x}"]
  \arrow[d]
&
C^{(d)}
  \arrow[d]
\\
\operatorname{Pic}^{d-1}(C)
  \arrow[r, "\cong" above, "{-\otimes\mathcal{O}_C(x)}" below]
&
\operatorname{Pic}^{d}(C),
\end{tikzcd}
\]
where the vertical maps are the Abel--Jacobi maps, which are the same as the Albanese maps of $C^{(d-1)}$ and $C^{(d)}$. So, all the maps in the diagram, except possibly the $+x$ map, induce isomorphisms on the spaces of $1$-forms under pullback. Hence the $+x$ map also induces an isomorphism on the spaces of $1$-forms. Thus we have an isomorphism $H^0(S,\Omega_S)\to H^0(S_x,\Omega_{S_x})$. Thus we have a commutative diagram
\[
\begin{tikzcd}
H^0(S,\Omega_S)
    \arrow[r]
    \arrow[dr, "\cong"']
&
H^0(S_x,\Omega_S|_{S_x})
    \arrow[hook, d]
\\
&
H^0(S_x,\Omega_{S_x})
\end{tikzcd}
\]
As the diagonal map in this diagram is an isomorphism and the vertical map is injective, one easily sees that the horizontal map must be an isomorphism, thereby proving the Lemma.
\end{proof}
\begin{lemma}\label{homtf}
Let $F$ be a vector bundle on a smooth projective curve $C$. Let $d\geq 2$ be an integer and $S=C^{(d)}$. Then we have a natural isomorphism
\[
\operatorname{Hom}_S(T_S,F^{[d]})
\cong H^0(C,\omega_C)\otimes H^0(C,F).
\]
\end{lemma}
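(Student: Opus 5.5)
The plan is to move the computation from $S$ to the universal divisor $\Sigma:=\Sigma_d(C)$ by adjunction along the finite map $p$, and then push forward to $C$ along $q$. Since $F^{[d]}=p_*q^*F$ and $p$ is finite flat, for any vector bundle $G$ on $S$ we have
\[
\operatorname{Hom}_S(G,p_*q^*F)\cong \operatorname{Hom}_{\Sigma}(p^*G,q^*F).
\]
Applying this with $G=T_S$ gives
\[
\operatorname{Hom}_S(T_S,F^{[d]})\cong H^0(\Sigma,\,p^*\Omega_S\otimes q^*F).
\]

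To make $q$ transparent, I will use the isomorphism $C\times C^{(d-1)}\xrightarrow{\ \cong\ }\Sigma$, $(x,D')\mapsto (x,x+D')$. Under it, $q$ becomes the first projection, which is smooth and projective. The fibre $q^{-1}(x)$ is mapped by $p$ isomorphically onto $S_x$, namely via the map $+x$ of Lemma \ref{Sx}. Hence
\[
(p^*\Omega_S)|_{q^{-1}(x)}\cong \Omega_S|_{S_x}.
\]
By the projection formula,
\[
H^0(\Sigma,p^*\Omega_S\otimes q^*F)=H^0\bigl(C,\,(q_*p^*\Omega_S)\otimes F\bigr).
\]
So it remains to identify $q_*p^*\Omega_S$.

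Lemma \ref{Sx} says the restriction $H^0(S,\Omega_S)\to H^0(S_x,\Omega_S|_{S_x})$ is an isomorphism for every $x$. In particular $h^0(q^{-1}(x),p^*\Omega_S|_{q^{-1}(x)})=h^0(S,\Omega_S)$ is independent of $x$. Since $C$ is reduced and $q$ is flat, Grauert's theorem shows that $q_*p^*\Omega_S$ is locally free of that rank and satisfies base change. Consider the natural map
\[
H^0(S,\Omega_S)\otimes\mathcal O_C\longrightarrow q_*p^*\Omega_S
\]
given by pulling back global $1$-forms. On the fibre at $x$ it is exactly the restriction map of Lemma \ref{Sx}, hence an isomorphism. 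So it is an isomorphism of vector bundles, and $q_*p^*\Omega_S$ is trivial. Therefore
\[
\operatorname{Hom}_S(T_S,F^{[d]})\cong H^0(S,\Omega_S)\otimes H^0(C,F).
\]

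Finally, $H^0(S,\Omega_S)\cong H^0(C,\omega_C)$ naturally. This holds because the Abel--Jacobi map $C^{(d)}\to\operatorname{Pic}^d(C)$ is the Albanese map, so pullback identifies $1$-forms on $S$ with $H^0(\operatorname{Pic}^d(C),\Omega^1)\cong H^0(C,\omega_C)$. Equivalently, one can use $1$-forms on $C^d$ invariant under $S_d$. Every identification above is canonical, which gives naturality.

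The step I expect to need the most care is the triviality of $q_*p^*\Omega_S$. One must check that the comparison map really restricts on fibres to the map in Lemma \ref{Sx}, using base change, and that $p|_{q^{-1}(x)}$ is the embedding $+x$ onto $S_x$. The adjunction step for $p$ (finite, with $p^*$ left adjoint to $p_*$) and the projection formula for $q$ are routine.
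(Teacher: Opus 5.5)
Your proposal is correct and follows the paper's proof essentially step for step: move the problem to $\Sigma$ and identify it with $H^0\bigl(C,(q_*p^*\Omega_S)\otimes F\bigr)$; show $q_*p^*\Omega_S$ is trivial via Grauert and the fibrewise isomorphism of Lemma \ref{Sx}; then identify $H^0(S,\Omega_S)\cong H^0(C,\omega_C)$ through the Albanese map. The only differences are cosmetic. You use the adjunction $p^*\dashv p_*$ where the paper uses the projection formula for $p$, and you make explicit the identification $\Sigma\cong C\times C^{(d-1)}$, which justifies the flatness of $q$ needed for Grauert; the paper leaves that point implicit.
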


\begin{proof}
Set $\Sigma:= \Sigma_{d}(C)$. Let $\widetilde{p}:C\times S\longrightarrow S$ and
$\widetilde{q}:C\times S\longrightarrow C$ be the projections, and
$p=\widetilde{p}|_{\Sigma}$ and $q=\widetilde{q}|_{\Sigma}$. First we show
\begin{equation}\label{[]}
\operatorname{Hom}_{S}\left(T_{S},F^{[d]}\right)
\cong
H^0\left(C,q_*p^*\Omega_{S}\otimes F\right).
\end{equation}
This follows from the following chain of equalities:
\[
\begin{aligned}
H^0(C,q_*p^*\Omega_S\otimes F)
&\cong H^0(C,q_*(p^*\Omega_S\otimes q^*F))\\
&\cong H^0(\Sigma,p^*\Omega_S\otimes q^*F)\\
&\cong H^0(S,p_*(p^*\Omega_S\otimes q^*F))\\
&\cong H^0(S,\Omega_S\otimes F^{[d]})\\
&=H^0(S,\mathcal{H}om_S(T_S,F^{[d]}))\\
&=\operatorname{Hom}_S(T_S,F^{[d]}).
\end{aligned}
\]
Now note that by Lemma \ref{Sx},
\[
h^0(q^{-1}(x),p^*\Omega_S|_{q^{-1}(x)})=h^0(S_x,\Omega_S|_{S_x})
=h^0(S,\Omega_S)
\]
is independent of $x\in C$. So by Grauert's theorem and cohomology and base change,
$q_*p^*\Omega_S$ is a vector bundle on $C$ with
$H^0(S_x,\Omega_S|_{S_x})$ being the fibre over $x\in C$. Also, $\widetilde{q}_*\widetilde{p}^*\Omega_S
\cong H^0(S,\Omega_S)\otimes\mathcal{O}_C$
is a trivial vector bundle on $C$ with each fibre $H^0(S,\Omega_S)$. The natural map
\[
\widetilde{p}^*\Omega_S\longrightarrow
p^*\Omega_S=\widetilde{p}^*\Omega_S\otimes\mathcal{O}_{\Sigma}
\]
induces a map of vector bundles on $C$
\[
\widetilde{q}_*\widetilde{p}^*\Omega_S
\longrightarrow q_*p^*\Omega_S,
\]
which is an isomorphism in each fibre by Lemma \ref{Sx}, hence is an isomorphism. Also, $H^0(S,\Omega_S)=H^0(C,\omega_C),$
the Albanese varieties of $C$ and $S$ being the same. So,
\[
H^0(C,\omega_C)\otimes F\cong H^0(S,\Omega_S)\otimes F
\cong \widetilde{q}_*\widetilde{p}^*\Omega_S\otimes F \cong q_*p^*\Omega_S\otimes F.
\]
Now taking $H^0$ and using \eqref{[]}, we get the result.

\end{proof}
\begin{lemma}\label{homtf1}
Let $F$ be a vector bundle on an elliptic curve $C$ with origin $O$.
Let $d\geq 2$ be an integer and $S'=C^{((d))}.$
Then the following hold:

\begin{enumerate}
    \item[(a)] If $d\geq 3$, then $\operatorname{Hom}_{S'}\left(T_{S'},F^{[[d]]}\right)=0.$

    \item[(b)] If $d=2$, then $\operatorname{Hom}_{S'}\left(T_{S'},F^{[[d]]}\right)
    \cong H^0(C,F(-4\cdot O)).$
\end{enumerate}
\end{lemma}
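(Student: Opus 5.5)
The plan is to run the same adjunction argument as in the proof of Lemma \ref{homtf}, with $\Sigma'_d(C)$, $p'$, $q'$ in place of $\Sigma_d(C)$, $p$, $q$. Since $p'$ is finite and flat, $p'_*\bigl(p'^*\Omega_{S'}\otimes q'^*F\bigr)\cong \Omega_{S'}\otimes F^{[[d]]}$. The chain of identifications in the proof of Lemma \ref{homtf} then gives
\[
\operatorname{Hom}_{S'}\bigl(T_{S'},F^{[[d]]}\bigr)\cong H^0\bigl(\Sigma'_d(C),\,p'^*\Omega_{S'}\otimes q'^*F\bigr)\cong H^0\bigl(C,\,q'_*p'^*\Omega_{S'}\otimes F\bigr).
\]
So everything reduces to computing the sheaf $q'_*p'^*\Omega_{S'}$ on $C$, and here the geometry is explicit. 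Indeed, $S'=C^{((d))}$ is the complete linear system $|\mathcal O_C(d\cdot O)|\cong\mathbb P^{d-1}$. For $x\in C$, the fibre $q'^{-1}(x)$ is the set of divisors in this system passing through $x$, which is a hyperplane $H_x\subset\mathbb P^{d-1}$; equivalently, $C$ sits in the dual projective space via $|d\cdot O|$, and $\Sigma'_d(C)$ is the incidence correspondence.

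\emph{Case $d\geq 3$.} Each $H_x\cong\mathbb P^{d-2}$ has positive dimension. Restricting the conormal sequence gives
\[
0\to\mathcal O_{H_x}(-1)\to\Omega_{\mathbb P^{d-1}}|_{H_x}\to\Omega_{H_x}\to 0,
\]
and both outer terms have no global sections, so $H^0(H_x,\Omega_{S'}|_{H_x})=0$ for every $x\in C$. Since $q'$ is flat (it is a projective bundle over $C$), cohomology and base change, or simply the torsion-freeness of $q'_*$ of a locally free sheaf combined with vanishing on fibres, gives $q'_*p'^*\Omega_{S'}=0$. Hence the Hom space vanishes, which proves (a).

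\emph{Case $d=2$.} Here $S'\cong\mathbb P^1$ and $H_x$ is the single divisor $x+(-x)$. So $q'\colon\Sigma'_2(C)\to C$ is an isomorphism, and under this isomorphism $p'$ becomes the double cover $C\to\mathbb P^1=C/\{\pm1\}$. This gives
\[
q'_*p'^*\Omega_{S'}\cong p'^*\mathcal O_{\mathbb P^1}(-2).
\]
The point of $\mathbb P^1$ corresponding to the divisor $2\cdot O$ pulls back to $2\cdot O$, so $p'^*\mathcal O_{\mathbb P^1}(1)\cong\mathcal O_C(2\cdot O)$. Therefore $q'_*p'^*\Omega_{S'}\otimes F\cong F(-4\cdot O)$, which proves (b).

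The main point requiring care is identifying $\Sigma'_d(C)$ and its fibres correctly: checking that $q'^{-1}(x)$ is a reduced hyperplane, and for $d=2$ that $q'$ really is an isomorphism and that $p'^*\mathcal O(1)\cong\mathcal O_C(2\cdot O)$. I would verify this via the embedding of $C$ in the dual projective space by $|d\cdot O|$, using that this system is very ample for $d\geq 3$ and gives the degree-$2$ map to $\mathbb P^1$ for $d=2$.
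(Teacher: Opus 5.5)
Your proposal is correct and follows the paper's proof essentially step for step. It uses the same adjunction reduction to $H^0(C,q'_*p'^*\Omega_{S'}\otimes F)$. For $d\geq 3$ it uses the same fibrewise vanishing on the hyperplanes $H_x\subset\mathbb{P}^{d-1}$ plus cohomology and base change, with the conormal sequence in place of the paper's splitting $\Omega_{\mathbb{P}^{d-1}}|_{H_x}\cong\Omega_{H_x}\oplus\mathcal{O}_{H_x}(-1)$. For $d=2$ it uses the same identification of $p'\circ q'^{-1}$ with the double cover given by $|2\cdot O|$, so that $q'_*p'^*\Omega_{S'}\cong\mathcal{O}_C(-4\cdot O)$.
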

\begin{proof}
Let $\Sigma'=\Sigma_d'(C),$
and let
\[
p:\Sigma'\longrightarrow S',
\qquad
q:\Sigma'\longrightarrow C
\]
be the projections. The same proof as of \eqref{[]} shows
\begin{equation}\label{[[]]}
\operatorname{Hom}_{S'}\left(T_{S'},F^{[[d]]}\right)
\cong
H^0\left(C,q_*p^*\Omega_{S'}\otimes F\right).
\end{equation}

Note that since $\mathcal O_C(d\cdot O)$ is base-point free, for all $x\in C$ we have $H_x:=p\bigl(q^{-1}(x)\bigr)\subset S'$
is a hyperplane in $S'\cong\mathbb P^{d-1}.$
So, if $d\geq3$, then
\[
h^0\left(q^{-1}(x),
p^*\Omega_{S'}\big|_{q^{-1}(x)}\right)
=
h^0\left(H_x,\Omega_{\mathbb P^{d-1}}\big|_{H_x}\right)
=
h^0\left(H_x,\Omega_{H_x}\oplus\mathcal O_{H_x}(-1)\right)
=0.
\]
Thus, by cohomology and base change, $q_*p^*\Omega_{S'}=0.$
So (a) follows from \eqref{[[]]}.

Now suppose $d=2$. So $q$ is an isomorphism, and
\[
\pi:=p\circ q^{-1}:C\longrightarrow S'\cong\mathbb P^1
\]
is the quotient map given by the complete linear system $|2\cdot O|$. Thus $$q_*p^*\Omega_{S'}
=
\pi^*K_{\mathbb P^1}
\cong \pi^*\mathcal{O}_{\mathbb P^1}(-2)
\cong \mathcal O_C(-4\cdot O).$$
Now (b) follows from \eqref{[[]]}.
\end{proof}

\begin{lemma}\label{multiprojective}
Let $a_1,\ldots,a_r$ be positive integers, and $X=\prod_{i=1}^r \mathbb{P}^{a_i}.$
Then $\operatorname{End}(T_X)\cong \mathbb{C}^r$
as $\mathbb{C}$-algebras.
\end{lemma}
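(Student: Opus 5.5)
We use the decomposition $T_X\cong\bigoplus_{i=1}^r \pi_i^*T_{\mathbb P^{a_i}}$, where $\pi_i:X\to\mathbb P^{a_i}$ are the projections. This gives a block decomposition
\[
\operatorname{End}(T_X)\cong\bigoplus_{i,j}\operatorname{Hom}_X\bigl(\pi_i^*T_{\mathbb P^{a_i}},\pi_j^*T_{\mathbb P^{a_j}}\bigr),
\]
and the plan is to show that the diagonal blocks are $\mathbb C$ and the off-diagonal blocks vanish. Then the obvious map $\mathbb C^r\to\operatorname{End}(T_X)$, sending $(c_1,\dots,c_r)$ to $\sum_i c_i\cdot\mathrm{id}_{\pi_i^*T_{\mathbb P^{a_i}}}$, is a bijective $\mathbb C$-algebra homomorphism. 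The product structure is clear, since the identities of the summands are orthogonal idempotents.

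For each block we rewrite $\operatorname{Hom}_X(\pi_i^*T_i,\pi_j^*T_j)=H^0(X,\pi_i^*\Omega_i\otimes\pi_j^*T_j)$, writing $T_i=T_{\mathbb P^{a_i}}$ and $\Omega_i=\Omega_{\mathbb P^{a_i}}$.

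\textbf{Diagonal blocks ($i=j$).} Here the bundle $\mathcal{E}nd(\pi_i^*T_i)=\pi_i^*\mathcal{E}nd(T_i)$ is pulled back along $\pi_i$. The map $\pi_i$ is a contraction, being a projection with fibre a product of projective spaces, so $\pi_{i*}\mathcal O_X=\mathcal O$. The projection formula then gives $H^0(X,\pi_i^*\mathcal{E}nd(T_i))=H^0(\mathbb P^{a_i},\mathcal{E}nd(T_i))=\operatorname{End}(T_{\mathbb P^{a_i}})=\mathbb C$, by Lemma \ref{simple}(1).

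\textbf{Off-diagonal blocks ($i\neq j$).} The bundle $\pi_i^*\Omega_i\otimes\pi_j^*T_j$ is an exterior tensor product pulled back from $\mathbb P^{a_i}\times\mathbb P^{a_j}$. Applying the projection formula along the remaining factors and then the Künneth formula gives
\[
H^0(X,\pi_i^*\Omega_i\otimes\pi_j^*T_j)\cong H^0(\mathbb P^{a_i},\Omega_{\mathbb P^{a_i}})\otimes H^0(\mathbb P^{a_j},T_{\mathbb P^{a_j}}).
\]
The first factor is zero, since projective space has no global $1$-forms (for example, because $h^{1,0}=0$, or from the Euler sequence), so the block vanishes.

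The only point needing care is the Künneth step, together with making sure that the diagonal blocks really are scalars rather than something larger. Both follow directly from $\pi_*\mathcal O=\mathcal O$ for the projections and from Lemma \ref{simple}(1), so I expect no real obstacle here.
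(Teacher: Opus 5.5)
Your proposal is correct and follows essentially the same argument as the paper. You decompose $T_X$ as the direct sum of the pullbacks $\pi_i^*T_{\mathbb P^{a_i}}$, use simplicity of $T_{\mathbb P^{a_i}}$ for the diagonal blocks, and kill the off-diagonal blocks via K\"unneth and $H^0(\mathbb P^{a_i},\Omega_{\mathbb P^{a_i}})=0$; you additionally make explicit the projection-formula justification and the orthogonal-idempotent argument for the product algebra structure, which the paper leaves implicit.
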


\begin{proof}
Let $p_i:X\longrightarrow \mathbb{P}^{a_i}$
be the projections. So
\[
T_X\cong\bigoplus_{i=1}^r p_i^*T_{\mathbb{P}^{a_i}}.
\]
Since $T_{\mathbb{P}^{a_i}}$ is simple,
\[
\operatorname{End}(p_i^*T_{\mathbb{P}^{a_i}})
\cong
\operatorname{End}(T_{\mathbb{P}^{a_i}})
\cong\mathbb{C}.
\]

Also, for $i\neq j$, we have
\[
\operatorname{Hom}_X
\left(p_i^*T_{\mathbb{P}^{a_i}},
p_j^*T_{\mathbb{P}^{a_j}}\right)
=
H^0\left(
X,
p_i^*\Omega_{\mathbb{P}^{a_i}}
\otimes
p_j^*T_{\mathbb{P}^{a_j}}
\right).
\]
By the K\"unneth formula, this is $H^0(\mathbb{P}^{a_i},\Omega_{\mathbb{P}^{a_i}})
\otimes
H^0(\mathbb{P}^{a_j},T_{\mathbb{P}^{a_j}})$, which is $0$ as $H^0(\mathbb{P}^{a_i},\Omega_{\mathbb{P}^{a_i}})=0.$
These together prove the Lemma.
\end{proof}

\begin{lemma}\label{end=c2}
Let $C$ be an elliptic curve, $d\geq 2$ an integer, and $S=C^{(d)}$. Then $\operatorname{End}(T_S)\cong \mathbb{C}^2$
as $\mathbb{C}$-algebras.
\end{lemma}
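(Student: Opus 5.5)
By Proposition \ref{r=1}(1), the sum map $b:S\to C$ is smooth and $T_S\cong \mathcal{O}_S\oplus T_b$ with $T_b$ simple. So
\[
\operatorname{End}(T_S)\cong \operatorname{End}(\mathcal O_S)\oplus \operatorname{End}(T_b)\oplus \operatorname{Hom}(\mathcal O_S,T_b)\oplus \operatorname{Hom}(T_b,\mathcal O_S).
\]
The two diagonal terms are each $\mathbb C$: the first because $S$ is projective and connected, the second because $T_b$ is simple. It therefore suffices to show both off-diagonal terms vanish. Then $\operatorname{End}(T_S)$ is the product of the two diagonal blocks, i.e. $\mathbb C^2$ as $\mathbb C$-algebras, with the two projection idempotents.

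For $\operatorname{Hom}(\mathcal O_S,T_b)=H^0(S,T_b)$, I push forward along $b$. The fibres of $b$ are $\mathbb P^{d-1}$ and $T_b$ restricts to $T_{\mathbb P^{d-1}}$ on each. Hence $h^0$ of the restriction is constant, equal to $d^2-1$, and $b_*T_b$ is a vector bundle of that rank. Any global section of $T_b$ gives a vertical vector field. Alternatively, I would use that $H^0(S,T_S)$ is the Lie algebra of $\operatorname{Aut}^0(S)$. Since $T_S=\mathcal O_S\oplus T_b$, we get $h^0(T_S)=1+h^0(T_b)$. Vertical vector fields correspond to automorphisms of $S$ over $C$. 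The $\mathbb P^{d-1}$-bundle $b$ is $\mathbb P(\mathcal F)$ for some bundle $\mathcal F$ on $C$; in fact $\mathcal F$ is an indecomposable (Atiyah) bundle of degree $1$ and rank $d$, up to twist. Then $H^0(T_b)=H^0(C,\operatorname{ad}\mathcal F)$. Since $\mathcal F$ is stable, hence simple, we have $H^0(\operatorname{End}\mathcal F)=\mathbb C$, and so $H^0(\operatorname{ad}\mathcal F)=0$. Identifying $b$ with $\mathbb P(\mathcal F)$ for a stable $\mathcal F$ is the step I expect to be the main obstacle. A cleaner route avoids it: $\operatorname{Aut}^0(C^{(d)})$ is just the translations of $C$ for $d\ge2$ (e.g. \cite{Torelli} with $E=\mathcal O_C$). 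So $h^0(T_S)=1$, which forces $h^0(T_b)=0$.

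For $\operatorname{Hom}(T_b,\mathcal O_S)=H^0(S,\Omega_b)$, where $\Omega_b=T_b^\vee$, I argue fibrewise. On each fibre $\Omega_b$ restricts to $\Omega_{\mathbb P^{d-1}}$, and $H^0(\mathbb P^{d-1},\Omega_{\mathbb P^{d-1}})=0$. By cohomology and base change, $b_*\Omega_b=0$, so $H^0(S,\Omega_b)=0$. Alternatively, $h^0(\Omega_S)=g=1$ is already accounted for by the summand $\mathcal O_S$, since $\Omega_S\cong\mathcal O_S\oplus\Omega_b$.

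With both off-diagonal Homs zero, the matrix description gives $\operatorname{End}(T_S)\cong\mathbb C\times\mathbb C=\mathbb C^2$, compatibly with composition.
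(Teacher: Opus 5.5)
Your proof is correct and is essentially the paper's argument: split $T_S\cong\mathcal O_S\oplus T_b$ via Proposition \ref{r=1}, show $\operatorname{Hom}(T_b,\mathcal O_S)=0$ fibrewise using $H^0(\mathbb P^{d-1},\Omega_{\mathbb P^{d-1}})=0$, and get $H^0(S,T_b)=0$ from $h^0(S,T_S)=\dim\operatorname{Aut}^0(S)=1$. The only differences are the source for $\operatorname{Aut}^0(C^{(d)})\cong C$ (the paper uses \cite[Proposition 1.5]{CataneseCiliberto1993}, that every automorphism of $C^{(d)}$ is natural) and your tentative $\mathbb P(\mathcal F)$ route with $\mathcal F$ stable, which would also work but is not needed.
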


\begin{proof}
Let $S\xrightarrow{b}\operatorname{Pic}^d C\cong C$
be the Albanese map, which is a $\mathbb{P}^{d-1}$-bundle. So, $T_S\cong \mathcal{O}_S\oplus T_b.$
 By Proposition \ref{r=1}, $\operatorname{End}(T_b)=\mathbb{C}\cdot\operatorname{id}$. As $b$ is a $\mathbb{P}^{d-1}$-bundle and $H^0(\mathbb{P}^{d-1},\Omega_{\mathbb{P}^{d-1}})=0,$
we get $\operatorname{Hom}_S(T_b,\mathcal{O}_S)=0.$
Since by \cite[Proposition 1.5]{CataneseCiliberto1993} every automorphism of $C^{(d)}$ is natural, we have
\[
\operatorname{Aut}^0(S)\cong \operatorname{Aut}^0(C)\cong C,
\]
hence
\[
h^0(S,T_S)=\dim\operatorname{Aut}^0(S)=1.
\]
Thus
\[
1+h^0(S,T_b)
=h^0(S,\mathcal{O}_S\oplus T_b)
=h^0(S,T_S)
=1,
\]
forcing $h^0(S,T_b)=0.$

These together show $\operatorname{End}(T_S)$ is the product ring $\mathbb{C}^2$.
\end{proof}
\section{Tangent bundle of Quot and Kummer-Quot schemes}
We prove Theorem \ref{A}, Supplement \ref{PE} and Theorem \ref{T_Q'} together in this section. We introduce the following notation, which will be used throughout this section.
Let $r$ be the rank of $E$. If $r=1$, then $Q=C^{(d)}$, in which case Theorem \ref{A} was already proved in Proposition \ref{r=1}, and Theorem \ref{T_Q'} also follows as $Q'$ is a projective space hence $T_{Q'}$ is always simple. So we assume $r\geq 2$ from now on. Let $S=C^{(d)}$ and  $U\subset S$
be the complement of the big diagonal. Let $\varphi:Q\longrightarrow S$
be the Hilbert--Chow morphism, $V=\varphi^{-1}(U),$
and let $f:V\longrightarrow U$
be the restriction of $\varphi$. For $[D]\in U$, the fibre of $\varphi$ over $[D]$ is denoted by $F_D$. Thus, $f$ is smooth, and each $F_D$ is a product of projective spaces.
Let $Q_0\subset Q$
be the set of points where $\varphi$ is smooth. Then $Q_0$ is an open subset containing $V$, and $
\varphi(Q_0)=S$
by \cite[Proposition 3.5]{GangopadhyaySebastianFundamental}. Thus $Q\setminus Q_0$ does not contain any fibre of $\varphi$. As $\varphi$ is flat with irreducible fibres by \cite[Corollary 6.3 and 6.6]{GangopadhyaySebastianFundamental} and $Q\setminus Q_0$ is contained in the preimage of the big diagonal in $C^{(d)}$ by \cite[Lemma 2.3]{Torelli}, we see that $Q\setminus Q_0$ has codimension $\geq 2$ in $Q$. We will use this fact repeatedly. In fact, since fibres of $\varphi$ are normal by \cite[Corollary 6.6]{GangopadhyaySebastianFundamental}, codimension of $Q\setminus Q_0$ is $\geq 3$, and this is an equality if $d\geq 2$ by \cite[Theorem 1.1(4)]{ItoPunctualQuot} and \cite[Corollary 6.5]{GangopadhyaySebastianFundamental}.

Let $\varphi_0:Q_0\longrightarrow S$
be the restriction of $\varphi$. Also, let $K\cong \varphi_*T_{\varphi}$ be the kernel of the natural map $\varphi_*T_Q\to T_S$ as in \cite[Proof of Theorem 4.1]{Torelli}. By \cite[Claim 4.2]{Torelli}, $K\cong (\operatorname{ad} E)^{[d]}.$
\begin{claim}\label{Tf to Tf}
If $\psi\in \operatorname{End}(T_Q)$, then $\psi(T_{\varphi_0})\subset T_{\varphi_0}$
over $Q_0$.
\end{claim}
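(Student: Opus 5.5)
The plan is to show that the composite
\[
T_{\varphi_0}\hookrightarrow T_{Q_0}\xrightarrow{\psi} T_{Q_0}\xrightarrow{d\varphi_0}\varphi_0^*T_S
\]
vanishes. Since $\varphi_0$ is smooth, $T_{\varphi_0}$ is exactly the kernel of the surjection $d\varphi_0$, so vanishing of this composite gives $\psi(T_{\varphi_0})\subset T_{\varphi_0}$ over $Q_0$.

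Call the composite $\theta\in\operatorname{Hom}_{Q_0}(T_{\varphi_0},\varphi_0^*T_S)$. It suffices to prove $\theta=0$ on a dense open subset of $Q_0$, because $\varphi_0^*T_S$ is locally free and hence torsion free. I would work over $V=\varphi^{-1}(U)$, where $f:V\to U$ is smooth with fibres $F_D\cong\prod_{i=1}^d\mathbb P^{r-1}$. On $V$ we have $T_{\varphi_0}|_V=T_f$ and $\varphi_0^*T_S|_V=f^*T_U$.

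The key step is to restrict $\theta$ to a fibre $F_D$ for $[D]\in U$. There $T_f|_{F_D}=T_{F_D}$, and $f^*T_U|_{F_D}$ is the trivial bundle $\mathcal O_{F_D}\otimes T_{[D]}U$. So
\[
\theta|_{F_D}\in \operatorname{Hom}(T_{F_D},\mathcal O_{F_D})\otimes T_{[D]}U=H^0(F_D,\Omega_{F_D})\otimes T_{[D]}U .
\]
By Künneth, $H^0(F_D,\Omega_{F_D})=\bigoplus_i H^0(\mathbb P^{r-1},\Omega_{\mathbb P^{r-1}})=0$; this is the vanishing already used in Lemma \ref{multiprojective}, and it needs $r\geq 2$ so that the fibres are positive dimensional. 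Hence $\theta|_{F_D}=0$ for every $[D]\in U$. Since the fibres $F_D$ cover $V$, the section $\theta$ of the locally free sheaf $\mathcal{H}om(T_f,f^*T_U)$ vanishes at every point of $V$. As $V$ is reduced, $\theta|_V=0$.

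To conclude, $V$ is dense in $Q_0$ (it is open in the irreducible $Q$), and $\mathcal{H}om(T_{\varphi_0},\varphi_0^*T_S)$ is locally free on $Q_0$, so $\theta=0$ on all of $Q_0$. I expect no serious obstacle. The only points needing care are that $T_{\varphi_0}$ is a subbundle with locally free quotient $\varphi_0^*T_S$ (which holds because $\varphi_0$ is smooth), and that the restriction of $\varphi_0^*T_S$ to a fibre is trivial.
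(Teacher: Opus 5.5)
Your proposal is correct and follows the paper's argument. You restrict to the fibres $F_D$ over $U$, where $T_Q|_{F_D}/T_{F_D}$ is trivial and $H^0(F_D,\Omega_{F_D})=0$, and then pass from the dense open $V$ to $Q_0$; you only make explicit the density and torsion-freeness step that the paper's ``it suffices to show over $V$'' leaves implicit.
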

\begin{proof}
 It suffices to show $\psi(T_f)\subset T_f$
over $V$. Since $F_D$ is a product of projective spaces for $[D]\in U$, we have $\operatorname{Hom}_{F_D}(T_{F_D},\mathcal O_{F_D})=0$. As $\frac{T_Q|_{F_D}}{T_{F_D}}\cong T_S([D])\otimes_{\mathbb{C}}\mathcal{O}_{F_D}$
is trivial, we get
\[
\operatorname{Hom}_{F_D}
\left(
T_{F_D},\frac{T_Q|_{F_D}}{T_{F_D}}\right)=0.
\]
Hence $\psi|_{F_D}(T_{F_D})\subset T_{F_D}$
for all $[D]\in U$, proving the Claim.
\end{proof} 

\begin{claim}\label{no idempotent}
The ring $\operatorname{End}(T_{\varphi_0})$ has no nontrivial idempotent.
\end{claim}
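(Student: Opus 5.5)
The plan is to restrict an idempotent $e\in\operatorname{End}(T_{\varphi_0})$ to the open set $V=\varphi^{-1}(U)\subset Q_0$. We then show that $e|_V$ is $0$ or $\operatorname{id}$, and conclude by density: $T_{\varphi_0}$ is locally free, so a section of $\operatorname{\mathcal{E}nd}(T_{\varphi_0})$ on the irreducible variety $Q_0$ vanishing on the dense open set $V$ vanishes. Applying this to $e$ and to $e-\operatorname{id}$ gives the claim.

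\emph{Step 1: fibrewise structure.} Over $U$ the map $f:V\to U$ is smooth and proper. For $[D]=[x_1+\dots+x_d]\in U$ we have $F_D\cong\prod_{i=1}^d\mathbb P(E_{x_i})\cong(\mathbb P^{r-1})^d$, and $T_f|_{F_D}=T_{F_D}$. By Lemma \ref{multiprojective}, $\operatorname{End}(T_{F_D})\cong\mathbb C^d$, and the $d$ factors are indexed by the points $x_i$ of $D$. Since $h^0(F_D,\operatorname{\mathcal{E}nd}(T_{F_D}))=d$ is constant, Grauert's theorem and cohomology and base change (as in the proof of Lemma \ref{simple}(2)) show that $\mathcal A:=f_*\operatorname{\mathcal{E}nd}(T_f)$ is a rank $d$ bundle of commutative $\mathcal O_U$-algebras, with fibre $\mathbb C^d$ at each $[D]\in U$. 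Moreover $\operatorname{End}(T_f)=H^0(U,\mathcal A)$.

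\emph{Step 2: identify $\mathcal A$ with the functions on the universal divisor over $U$.} Let $\Sigma_U=p^{-1}(U)\subset\Sigma_d(C)$; then $p:\Sigma_U\to U$ is finite étale of degree $d$. Globally on $V$, the relative tangent bundle splits as $T_f\cong\bigoplus$ of the tangent directions of the factors $\mathbb P(E_{x})$, indexed étale-locally by the points of $\Sigma_U$. Concretely, the projection onto the factor corresponding to $x$ gives an $\mathcal O_U$-algebra map $p_*\mathcal O_{\Sigma_U}\to\mathcal A$. This map is an isomorphism on every fibre, both fibres being $\mathbb C^d$ with matching idempotents, hence it is an isomorphism. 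So an idempotent in $\operatorname{End}(T_f)$ is the same as an idempotent regular function on $\Sigma_U$.

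\emph{Step 3: connectedness.} We have $\Sigma_d(C)\cong C\times C^{(d-1)}$ via $(x,D')\mapsto(x,x+D')$, so it is irreducible. Hence its nonempty open subset $\Sigma_U$ is irreducible, in particular connected. Its only idempotent functions are therefore $0$ and $1$, so $e|_V\in\{0,\operatorname{id}\}$, and the density argument above finishes the proof.

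The main obstacle is Step 2. One must make the fibrewise identification of Step 1 global and algebraic, and not just fibre-by-fibre. If writing down the map $p_*\mathcal O_{\Sigma_U}\to\mathcal A$ directly is awkward, I would instead argue via monodromy. The idempotent $e|_{F_D}$ determines a subset $I_D\subset\operatorname{Supp}D$, and this subset varies continuously in $[D]\in U$. Since the monodromy of $\Sigma_U\to U$ is the full symmetric group $S_d$ ($C^d$ minus the diagonals is connected), $I_D$ must be either empty or all of $\operatorname{Supp}D$.
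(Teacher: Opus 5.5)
Your proposal is correct and is essentially the paper's argument. The paper also restricts to $V$ and passes to the étale Galois cover $\widetilde U\to U$. There $\widetilde V$ is a fibre product of the $\mathbb P_{\widetilde U}(p_i^*E)$, and $\widetilde f_*\operatorname{\mathcal{E}nd}(T_{\widetilde f})\cong\mathcal O_{\widetilde U}^{\oplus d}$ $S_d$-equivariantly. Taking invariants gives $\operatorname{End}(T_f)\cong H^0(\widetilde U,\mathcal O_{\widetilde U})^{S_{d-1}}$, which is exactly your $H^0(\Sigma_U,\mathcal O_{\Sigma_U})$ since $\Sigma_U\cong\widetilde U/S_{d-1}$.

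Descending that $S_d$-equivariant splitting is precisely what makes your global map $p_*\mathcal O_{\Sigma_U}\to\mathcal A$ in Step 2 rigorous. Your use of the connectedness of $\Sigma_U$ plays the role of the paper's observation that this ring is an integral domain.
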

\begin{proof}
As $V$ is a dense open set in $Q_0$, we have an inclusion of algebras $\operatorname{End}(T_{\varphi_0})\hookrightarrow \operatorname{End}(T_f)$. Thus it suffices to show $\operatorname{End}(T_f)$ has no nontrivial idempotent.

Let $\widetilde{U}\subset C^d$ be the preimage of $U$ under the quotient map $C^d\longrightarrow C^{(d)}.$ So, $\pi:\widetilde{U}\longrightarrow U$
is a finite étale Galois cover with Galois group $S_d$. We will show that
\begin{equation}\label{fixed ring}
\operatorname{End}(T_f)
\cong
H^0(\widetilde{U},\mathcal{O}_{\widetilde{U}})^{S_{d-1}},
\end{equation}
where $S_{d-1}\subset S_d$ is the subgroup of permutations fixing $d$. This will complete the proof, as the right-hand side of \eqref{fixed ring} is an integral domain.

Let $\widetilde{V}=V\times_U\widetilde{U},$
and let $\widetilde{V}\xrightarrow{\widetilde f}\widetilde U,$
$\widetilde{V}\xrightarrow{\widetilde\pi}V$ be the projections.
 Since $\widetilde{\pi}$ is finite étale, we have $\widetilde{\pi}^*T_f=T_{\tilde{f}},$ hence $\widetilde{\pi}^*\mathcal{E}nd(T_f)\cong\mathcal{E}nd(T_{\tilde{f}}).$ As $\widetilde\pi$ is the quotient under the free action of $S_d$ on
$\widetilde V$, we get
\begin{equation}\label{end fixed ring}
\operatorname{End}(T_f)
\cong
\operatorname{End}(T_{\widetilde f})^{S_d}.
\end{equation}

If $\widetilde U\subset C^d\xrightarrow{p_i}C$
are the projections, then
\[
\widetilde V
\cong
\mathbb P_{\widetilde U}(p_1^*E)
\times_{\widetilde U}
\mathbb P_{\widetilde U}(p_2^*E)
\times_{\widetilde U}\cdots\times_{\widetilde U}
\mathbb P_{\widetilde U}(p_d^*E)
\] over $\widetilde U.$
This fibre product structure gives a decomposition of $T_{\widetilde f}$ and hence gives an algebra homomorphism
\[
\mathcal O_{\widetilde U}^{\oplus d}
\xlongrightarrow{\eta}
\widetilde f_*\operatorname{\mathcal{E}nd}(T_{\widetilde f}).
\]
Now $\mathcal O_{\widetilde U}^{\oplus d}$ has an $S_d$-action coming
from the $S_d$-action on $\mathcal O_{\widetilde U}$ and the $S_d$-action permuting
the factors. It is easy to see that $\eta$ is $S_d$-equivariant.

Using Lemma \ref{multiprojective}, and cohomology and base change, one sees that
$\eta$ is an isomorphism over each fiber, hence is an isomorphism. So, by \eqref{end fixed ring},
\[
\begin{aligned}
\operatorname{End}(T_f)
&\cong H^0\!\left(\widetilde U,
   \widetilde f_*\operatorname{\mathcal{E}nd}(T_{\widetilde f})\right)^{S_d} \\
&\cong H^0\!\left(\widetilde U,
   \mathcal O_{\widetilde U}^{\oplus d}\right)^{S_d} \\
&\cong
\left(H^0(\widetilde U,\mathcal O_{\widetilde U})^{\oplus d}\right)^{S_d}
\cong H^0(\widetilde U,\mathcal O_{\widetilde U})^{S_{d-1}},
\end{aligned}
\]
proving the Claim. 
\end{proof}
\begin{remark}
   In fact, for $g\geq 2$, we have $H^0(\widetilde U,\mathcal{O}_{\widetilde U})=\mathbb{C}$. Thus, $T_f$ and so $T_{\varphi_0}$ is in fact simple for $g\geq 2$ by \eqref{fixed ring}. Since we will not need this, we just give a sketch of the proof of this fact.

First note that for any finite flat morphism $Y\xrightarrow{\pi} X$ of varieties, the ring extension $H^0(X,\mathcal{O}_X)\subset H^0(Y,\mathcal{O}_Y)$ is integral. Indeed, given $g\in H^0(Y,\mathcal{O}_Y)$, multiplication by $g$ gives an $\mathcal{O}_X$-linear endomorphism of the vector bundle $\pi_*\mathcal{O}_Y$, so its characteristic polynomial is a monic polynomial in $H^0(X,\mathcal{O}_X)[t]$. By the Cayley--Hamilton theorem, $g$ is a root of this polynomial.

Applying this to $\widetilde U\xrightarrow{\pi}U$, it suffices to show $H^0(U,\mathcal{O}_U)=\mathbb{C}$.
If $g\in H^0(U,\mathcal{O}_U)$ is nonconstant, then it has a pole along the big diagonal $\Delta$ of some order $m\geq 1$. So $m\Delta$ is linearly equivalent to the divisor $D$ of zeroes of $g$, which does not contain $\Delta$ in its support. Now if $\eta:=\eta_{d-1,d}\in N_1(C^{(d)})$ is as in \cite[Theorem~4.1]{bastianelli2019effective}, then
$\eta\cdot m\Delta=0$ but $\eta\cdot D>0$, a contradiction.

Consequently, this answers a special case of a question of Leon Takhtajan
(see \cite[Remark 1.10]{chen2026diagonal}).
The general case of that question, asking whether all holomorphic functions
on $\widetilde U$ are constants, seems to be open.
\end{remark}

We now prove Theorem \ref{A}, Supplement \ref{PE} and Theorem \ref{T_Q'} in nine steps.  

\underline{\textbf{Step 1:}} We define a $\mathbb{C}$-algebra homomorphism $\operatorname{End}(T_Q)\xrightarrow{r}\operatorname{End}(T_S)$ with certain properties.

We have a map $
\epsilon:\varphi^*K\longrightarrow T_Q$
obtained by adjunction from the inclusion $K\subset \varphi_*T_Q$. By definition, $d\varphi\circ \epsilon=0,$
where $d\varphi:T_Q\to\varphi^*T_S$ is the natural map. Given
$u\in\operatorname{Hom}_S(T_S,K)$, we define
$N_u\in\operatorname{End}(T_Q)$ to be the composition
\[
T_Q\xrightarrow{d\varphi}\varphi^*T_S
\xrightarrow{\varphi^*u}\varphi^*K
\xrightarrow{\epsilon}T_Q.
\]
This gives a $\mathbb{C}$-linear map
\[
\operatorname{Hom}_S(T_S,K)
\xrightarrow{N}\operatorname{End}(T_Q).
\]

Given $[D]\in U$, $u([D])$ is a map $T_S([D])\longrightarrow H^0(F_D,T_{F_D}),$
and for $w\in F_D$, $N_u(w)$ is the composition
\[
T_Q(w)\xrightarrow{(d\varphi)_w}T_S([D])
\xrightarrow{u([D])}H^0(F_D,T_{F_D})
\longrightarrow T_{F_D}(w)\subset T_Q(w).
\]
As $(d\varphi)_w$ is surjective, from this pointwise description one sees that if $N_u=0$, then
$u([D])=0$ for all $[D]\in U$, hence $u=0$. In other words, $N$ is
injective.

Also, since $d\varphi\circ \epsilon=0$, we get
\begin{equation}\label{square zero}
N_uN_v=0\qquad\text{for all }u,v\in\operatorname{Hom}_S(T_S,K).
\end{equation}

Given $\psi\in\operatorname{End}(T_Q)$, by Claim \ref{Tf to Tf} it induces an
endomorphism of $T_{Q_0}/T_{\phi_0}=\phi_0^*T_S.$ As complement of $Q_0$ has codimension $\geq 2$ in $Q$, and $\phi$ is a contraction, we have $$End(\phi_0^*T_S)=End(\phi^*T_S)=End(T_S).$$
This gives $r(\psi)\in End(T_S)$.
Thus we get a $\mathbb{C}$-algebra homomorphism
\[
\operatorname{End}(T_Q)\xrightarrow{r}\operatorname{End}(T_S).
\]
Clearly $\ker r\supseteq\operatorname{im}N$.

\begin{claim}\label{ker im}
 Suppose $T_Q$ is indecomposable. Then we have
\begin{enumerate}
    \item $\ker r=\operatorname{im}N,$
    \item $\operatorname{im} r=\mathbb{C}\cdot\operatorname{id}.$
  \end{enumerate}  
\end{claim}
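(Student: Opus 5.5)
By \cite[Theorem 3]{atiyah1956krull} (or more precisely its proof), a vector bundle on a projective variety is indecomposable if and only if its endomorphism algebra is local. Since $\operatorname{End}(T_Q)$ is a finite-dimensional local $\mathbb{C}$-algebra, every $\psi\in\operatorname{End}(T_Q)$ can be written as $\psi=\lambda\cdot\operatorname{id}+n$ with $\lambda\in\mathbb{C}$ and $n$ nilpotent. The plan is to combine this with the fact that the relevant target rings are reduced.

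\emph{Part (2).} Since $r$ is an algebra homomorphism, $r(\psi)=\lambda\cdot\operatorname{id}+r(n)$, and $r(n)$ is nilpotent in $\operatorname{End}(T_S)$. This ring is reduced in every case:
\begin{itemize}
\item for $g\neq 1$, $T_S$ is simple by Proposition \ref{r=1}, so $\operatorname{End}(T_S)=\mathbb{C}$;
\item for $g=1$, $\operatorname{End}(T_S)\cong\mathbb{C}^2$ by Lemma \ref{end=c2}.
\end{itemize}
Hence $r(n)=0$, so $r(\psi)=\lambda\cdot\operatorname{id}$. This gives $\operatorname{im} r=\mathbb{C}\cdot\operatorname{id}$.

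\emph{Part (1).} We already know $\operatorname{im}N\subseteq\ker r$, so take $\psi\in\ker r$. By part (2), $r(\psi)=\lambda\cdot\operatorname{id}=0$ forces $\lambda=0$, so $\psi$ is nilpotent.
\begin{itemize}
\item By Claim \ref{Tf to Tf}, $\psi|_{Q_0}$ preserves $T_{\varphi_0}$, giving a restriction $\alpha\in\operatorname{End}(T_{\varphi_0})$, which is again nilpotent.
\item The proof of Claim \ref{no idempotent} embeds $\operatorname{End}(T_{\varphi_0})$ into $\operatorname{End}(T_f)\cong H^0(\widetilde U,\mathcal{O}_{\widetilde U})^{S_{d-1}}$, which is an integral domain and in particular reduced. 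Hence $\alpha=0$.
\item Since $\psi\in\ker r$, the endomorphism induced by $\psi$ on $T_{Q_0}/T_{\varphi_0}=\varphi_0^*T_S$ is zero.
\end{itemize}
Together these mean that on $Q_0$, $\psi$ kills $T_{\varphi_0}$ and lands in $T_{\varphi_0}$. So $\psi|_{Q_0}$ factors as
\[
T_{Q_0}\xrightarrow{d\varphi_0}\varphi_0^*T_S\xrightarrow{\beta}T_{\varphi_0}\subset T_{Q_0}
\]
for a unique $\beta$, using that $d\varphi_0$ is surjective because $\varphi_0$ is smooth.

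\emph{Identifying $\beta$ with some $N_u$.} Since $Q\setminus Q_0$ has codimension $\geq 2$ and $T_Q$ is locally free, the map $\beta$, viewed as a map $\varphi_0^*T_S\to T_{Q_0}$, extends uniquely to $\widetilde\beta:\varphi^*T_S\to T_Q$. Moreover $d\varphi\circ\widetilde\beta=0$, since it vanishes on $Q_0$ and the target $\varphi^*T_S$ is torsion free. By adjunction, $\widetilde\beta$ corresponds to a map $u:T_S\to\varphi_*T_Q$ whose composite to $T_S$ is zero, so $u$ factors through $K$. Unwinding the adjunction gives $\widetilde\beta=\epsilon\circ\varphi^*u$. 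Then $\psi$ and $N_u$ agree on the dense open $Q_0$, so $\psi=N_u$. This proves $\ker r=\operatorname{im}N$.

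The main point to check is the extension and adjunction step: that the factorization obtained only on $Q_0$ really comes from an element of $\operatorname{Hom}_S(T_S,K)$, with $K=\varphi_*T_\varphi$. This rests on the codimension bound for $Q\setminus Q_0$ and on normality of $Q$. It is purely formal but is where care is needed.
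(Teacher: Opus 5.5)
Your proof is correct and follows the paper's argument in its essentials: nilpotency from indecomposability, vanishing on $T_{\varphi_0}$, factoring through $\varphi^*T_S$, extending across the codimension-$\geq 2$ complement of $Q_0$, and using adjunction to land in $\operatorname{Hom}_S(T_S,K)$. The only variations are that you kill the nilpotent restriction to $T_{\varphi_0}$ via the domain $H^0(\widetilde U,\mathcal O_{\widetilde U})^{S_{d-1}}$ rather than fibrewise via Lemma \ref{multiprojective}, and you prove (2) by reducedness of $\operatorname{End}(T_S)$ rather than by the paper's contradiction argument via surjectivity onto $\mathbb{C}^2$.

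One small fix is needed. Your case list for $\operatorname{End}(T_S)$ silently assumes $d\geq 2$: both Proposition \ref{r=1} and Lemma \ref{end=c2} require it, and for $g=1$, $d=1$ the ring is $\mathbb{C}$, not $\mathbb{C}^2$. Add that for $d=1$ one has $\operatorname{End}(T_S)=\operatorname{End}(T_C)=\mathbb{C}$ since $T_C$ is a line bundle; this case matters because the claim is applied with $d=1$ in Step 7.
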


\begin{proof}
\underline{$(1)$:} We only need to show $\ker r\subseteq\operatorname{im}N$. Let $\psi\in\ker r$. So $\psi$ is not invertible. As $T_Q$ is indecomposable,  \cite[Lemma 6]{atiyah1956krull} shows $\psi$ is nilpotent. For $[D]\in U$, $\psi$ induces an endomorphism $\psi_D$
of $T_{F_D}$ by Claim \ref{Tf to Tf}, which also must be nilpotent. Now Lemma \ref{multiprojective} forces $\psi_D=0$. So, $\psi(T_f)=0$ on $V$. Hence $\psi(T_{\varphi_0})=0$ on $Q_0$. As $T_{Q_0}/T_{\varphi_0}=\varphi_0^*T_S,$
so $\psi$ induces a homomorphism $\psi_0'\colon \varphi_0^*T_S\longrightarrow T_{Q_0}.$
As the complement of $Q_0$ has codimension $\geq 2$ in $Q$, $\psi_0'$ extends to $\psi'\colon \varphi^*T_S\longrightarrow T_Q.$

The map $d\varphi\colon T_Q\longrightarrow\varphi^*T_S$
induces a map
\[
\operatorname{Hom}_Q(\varphi^*T_S,T_Q)
\xrightarrow{\widetilde{d\varphi}\ }
\operatorname{Hom}_Q(\varphi^*T_S,\varphi^*T_S).
\]
As $\psi\in\ker r$, we have $\widetilde{d\varphi}(\psi')=0$ over $V$, hence $\widetilde{d\varphi}(\psi')=0$. Since $\varphi$ is a contraction, $\widetilde{d\varphi}$ can be identified with the map
\[
\operatorname{Hom}_S(T_S,\varphi_*T_Q)
\longrightarrow
\operatorname{Hom}_S(T_S,T_S),
\]
whose kernel is $\operatorname{Hom}(T_S,K)$. Thus $\psi'$ induces $u\in\operatorname{Hom}(T_S,K).$
Now $N_u$ agrees with $\psi$ on $V$, hence $N_u=\psi$.

\underline{$(2):$} Clearly $\operatorname{im} r\supseteq \mathbb{C}\cdot\operatorname{id}$. Suppose $\operatorname{im} r\neq \mathbb{C}\cdot\operatorname{id}.$
Then $\operatorname{End}(T_S)\neq \mathbb{C}\cdot\operatorname{id}$, hence $d\geq 2$ and by Proposition \ref{r=1} we get $g=1$. Now Lemma \ref{end=c2} forces $r$ to be surjective. As $T_Q$ is indecomposable, by \cite[Lemma 6]{atiyah1956krull} every element of $\operatorname{End}(T_Q)$ is either a unit or nilpotent. The same must be true for $\operatorname{im} r=\operatorname{End}(T_S).$
But this contradicts the description of $\operatorname{End}(T_S)$ as in Lemma \ref{end=c2}.
\end{proof}
\underline{\textbf{Step 2:}}  We use the homomorphism $r$ obtained in Step 1 to show that when $T_Q$ is not indecomposable then so is $T_S$, and there are decompositions of $T_Q$ and $T_S$ which are compatible in some sense. More precisely, we prove the following Claim.
\begin{claim}\label{A+B}
   Suppose $T_Q$ is not indecomposable. Then there are nonzero
subbundles $A,B$ of $T_Q$ and subbundles $\overline{A},\overline{B}$ of $T_S$ satisfying the following:
\begin{enumerate}
\item[(i)] $A\oplus B=T_Q$ and  $\overline{A}\oplus\overline{B}=T_S,$
\item[(ii)]$A|_{Q_0}\supset T_{\varphi_0},$
\item[(iii)] $d\varphi(B)=\varphi^*\overline{B}$, and $d\varphi|_B:B\longrightarrow\varphi^*\overline{B}$ is an isomorphism.
\end{enumerate} Here $d\varphi$ is the tangent map $T_Q\to \varphi^* T_S.$ Moreover $\overline{B}$ is always nonzero, and if $d\geq 2$ then $\overline{A}$ is also nonzero.
\end{claim}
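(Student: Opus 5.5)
If $T_Q$ is decomposable, there is a nontrivial idempotent $e\in\operatorname{End}(T_Q)$. The plan is to build $A$ and $B$ from $e$ and $1-e$, and to get $\overline{A},\overline{B}$ from the algebra homomorphism $r$ of Step 1.

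\textbf{Choosing the idempotent.} By Claim \ref{Tf to Tf}, $e$ preserves $T_{\varphi_0}$ over $Q_0$, so it restricts to an idempotent of $\operatorname{End}(T_{\varphi_0})$. Claim \ref{no idempotent} says this restriction is $0$ or $1$. Replacing $e$ by $1-e$ if necessary, we may assume $e|_{T_{\varphi_0}}=\operatorname{id}$. Then put $A=\operatorname{im} e$ and $B=\operatorname{im}(1-e)$. Both are subbundles, since images of idempotent bundle endomorphisms are, and both are nonzero because $e$ is nontrivial. We have $A\oplus B=T_Q$, and (ii) holds because $e$ is the identity on $T_{\varphi_0}$.

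\textbf{The splitting of $T_S$.} Let $\bar e=r(e)$, an idempotent in $\operatorname{End}(T_S)$, and set $\overline{A}=\operatorname{im}\bar e$ and $\overline{B}=\operatorname{im}(1-\bar e)$. This gives (i). By construction of $r$, we have $d\varphi\circ e=\varphi^*\bar e\circ d\varphi$ on $Q_0$, hence on all of $Q$, since both sides are maps of vector bundles agreeing on a dense open set. Therefore $d\varphi(B)=d\varphi((1-e)T_Q)=\varphi^*(1-\bar e)\,d\varphi(T_Q)$.

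\textbf{Condition (iii).} This is the step needing care. Because $d\varphi$ is surjective only on $Q_0$, the identity above gives $d\varphi(B)=\varphi^*\overline{B}$ only there. On $Q_0$ the kernel of $d\varphi$ is $T_{\varphi_0}\subset A$, so $d\varphi|_B$ is injective on $Q_0$. Hence $d\varphi|_B\colon B\to\varphi^*\overline{B}$ is a map of bundles that is an isomorphism on $Q_0$. The ranks agree, so its determinant is a section of a line bundle that is nonvanishing outside $Q\setminus Q_0$. That complement has codimension $\geq 2$, so the determinant vanishes nowhere and $d\varphi|_B$ is an isomorphism on all of $Q$. In particular $d\varphi(B)=\varphi^*\overline{B}$ everywhere.

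\textbf{Nonvanishing of $\overline{B}$ and $\overline{A}$.} Since $B\neq0$ and $B\cong\varphi^*\overline{B}$, we get $\overline{B}\neq 0$.

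Now suppose $d\geq 2$ and $\overline{A}=0$. Then $\overline{B}=T_S$, so $\operatorname{rank}B=\dim S=d$, and $A$ has rank $rd-d$, which equals the rank of $T_{\varphi_0}$. Together with $A|_{Q_0}\supset T_{\varphi_0}$, this forces $A|_{Q_0}=T_{\varphi_0}$. Thus $T_{\varphi_0}$ extends to a direct summand of $T_Q$, and on $Q_0$ the subbundle $B$ splits $d\varphi$, giving $T_{Q_0}\cong T_{\varphi_0}\oplus\varphi_0^*T_S$.

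To reach a contradiction, restrict to a fibre $F=\varphi^{-1}([D])$ with $D=2x+D'$, where $D'$ is a reduced divisor not containing $x$; this is possible since $d\geq2$. Over $Q_0$ the differential $d\varphi$ is surjective everywhere. A global splitting $B\cong\varphi^*T_S$ with $d\varphi|_B$ an isomorphism everywhere would make $d\varphi\colon T_Q\to\varphi^*T_S$ surjective on all of $Q$, so $\varphi$ would be smooth. But $\varphi$ is not smooth, because $Q\setminus Q_0$ is nonempty: it has codimension exactly $3$ when $d\geq 2$, as recalled before Claim \ref{Tf to Tf}. This is the contradiction, so $\overline{A}\neq 0$.

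I expect this last step to be the main obstacle. Its key input is that $\varphi$ has non-smooth points when $d\geq 2$, which is why the hypothesis $d\geq 2$ appears exactly there.
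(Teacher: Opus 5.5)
Your proposal is correct and follows essentially the paper's argument. You restrict idempotents to $T_{\varphi_0}$ via Claims \ref{Tf to Tf} and \ref{no idempotent} to get (ii), and push the splitting down to $T_S$ via $r$. For (iii) you use fibrewise injectivity of $d\varphi|_B$ on $Q_0$ and extend across the codimension $\geq 2$ complement. You then rule out $\overline{A}=0$ by non-smoothness of $\varphi$. The differences are minor: the paper works with a decomposition into indecomposables rather than a single idempotent, which is equivalent, and it cites \cite[Lemma 2.3]{Torelli} for non-smoothness. The rank count and the fibre over $2x+D'$ in your last paragraph are never used and can be dropped, since $\overline{B}=T_S$ together with (iii) already makes $d\varphi$ surjective.
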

\begin{proof} Write $T_Q=\bigoplus_{i=1}^N A_i,$
where the $A_i$'s are indecomposable subbundles and $N\geq 2$. By Claim \ref{Tf to Tf} and Claim \ref{no idempotent}, one can
easily show that $T_{\phi_0}\subseteq A_i|_{Q_0}$ for some $i$, say $i=1$. Let $A=A_1$ and $B=\bigoplus_{i=2}^N A_i$. So $A$ and $B$ are nonzero and $(ii)$ holds.

The projections onto $A$ and $B$ corresponding to the decomposition
$T_Q=A\oplus B$ give idempotents in $\operatorname{End}(T_Q)$ with sum $1$, whose images
under $r$ give idempotents in $\operatorname{End}(T_S)$ with sum $1$. These give a direct
sum decomposition $T_S=\overline{A}\oplus\overline{B},$
with
\begin{equation}\label{f*}
\varphi_0^*\overline{A}=d\varphi_0(A|_{Q_0})
\quad\text{and}\quad
\varphi_0^*\overline{B}=d\varphi_0(B|_{Q_0}).
\end{equation}

Note that $d\varphi(B)\subseteq\varphi^*\overline{B}$
on the dense open $Q_0$ by \eqref{f*}, so $d\varphi(B)\subseteq\varphi^*\overline{B}.$
By $(ii)$, for each $w\in Q_0$, we have $B(w)\cap T_{\varphi_0}(w)=0$
in $T_Q(w)$. Thus, the map $d\varphi|_B:B\longrightarrow\varphi^*\overline{B}$
is fibrewise injective on $Q_0$. 
Now by \eqref{f*} we get $d\varphi|_B:B\longrightarrow\varphi^*\overline{B}$
is an isomorphism over $Q_0$, hence is an isomorphism on the whole of $Q$,
as $Q\setminus Q_0$ has codimension $\geq 2$ in $Q$. This proves $(iii)$.

It remains to show the last statement. Note that $\overline{B}\neq 0$ by $(iii)$. If $d\geq 2$ and $\overline{A}=0$, then $\overline{B}=T_S$. So by $(iii)$, $d\varphi:T_Q\longrightarrow \varphi^*T_S$
is surjective, hence $\varphi$ is smooth. This contradicts \cite[Lemma 2.3]{Torelli}. Thus $\overline{A}\neq 0$. This completes the proof of the Claim.
\end{proof}
\noindent\underline{\textbf{Step 3:}}
We show that $T_Q$ is indecomposable in cases $(a)$, $(b)$, $(d)$ and $(e)$ of the Theorem \ref{A}.
Suppose $T_Q$ is not indecomposable.
We will show $g=1$ and $E$ is semihomogeneous.
Let $A$, $B$, $\overline{A}$ and $\overline{B}$ be as in Claim \ref{A+B}. By Claim \ref{A+B}, $T_S$ is not indecomposable. So by Proposition \ref{r=1}, we have $g=1$.
Let $S\xrightarrow{\,b\,}C$
be the sum map. By Lemma \ref{end=c2}, there is a unique nontrivial direct sum
decomposition of $T_S$, given by
\[
T_S=T_b\oplus H,
\]
where $H$ is a line subbundle of $T_S$ and
\[
H\xrightarrow{\,db\,}b^*T_C\cong\mathcal O_S
\]
is an isomorphism. So the pair of subbundles $(\overline A,\overline B)$ is
either $(T_b,H)$ or $(H,T_b)$.

If $(\overline A,\overline B)=(H,T_b)$, then $\operatorname{im}(d\varphi)\supseteq \varphi^*T_b$
by Claim \ref{A+B}$(iii)$. Since $T_Q\xrightarrow{\,da\,}a^*T_C$
is surjective by \cite[Lemma 5.1]{Torelli}, we see that the composition
\[
T_Q\xrightarrow{\,d\varphi\,}\varphi^*T_S
\longrightarrow
\frac{\varphi^*T_S}{\varphi^*T_b}=a^*T_C
\]
is surjective. This shows $\operatorname{im}(d\varphi)=\varphi^*T_S$, thus $d\varphi$ is surjective. So $\varphi$ is smooth,
contradicting \cite[Lemma 2.3]{Torelli}.

So $(\overline A,\overline B)=(T_b,H).$
By Claim \ref{A+B}$(iii)$, we have an isomorphism
\[
B\xrightarrow{\,d\varphi|_B\,}\varphi^*\overline B
=\varphi^*H
\xrightarrow[\cong]{\,\varphi^*db\,}
a^*T_C\cong\mathcal O_Q.
\]
Thus, the natural map
\[
H^0(Q,T_Q)\longrightarrow H^0(C,T_C)\cong\mathbb C,
\]
arising as the map on Lie algebras of the group homomorphism
\[
\operatorname{Aut}^0(Q)\xrightarrow{\,\eta\,}\operatorname{Aut}^0(C)
\]
induced by the contraction $a$, is nonzero. So, $\eta$ is nontrivial, hence surjective as $\operatorname{Aut}^0(C)\cong C$.

We have a commutative diagram as in \cite[Proof of Lemma 5.1]{Torelli}:
\begin{equation}\label{times d}
\begin{tikzcd}
\operatorname{Aut}^0(\mathbb P_C(E))
    \arrow[r,"\cong"]
    \arrow[d]
&
\operatorname{Aut}^0(Q)
    \arrow[d, "\eta"]
\\
\operatorname{Aut}^0(C)
    \arrow[r,"\times d"]
&
\operatorname{Aut}^0(C),
\end{tikzcd}
\end{equation}
whose top horizontal arrow is an isomorphism by \cite[Corollary B]{Torelli}.
Thus, the natural map $\operatorname{Aut}^0(\mathbb{P}_C(E))\longrightarrow \operatorname{Aut}^0(C)$
is surjective. In other words, $E$ is semihomogeneous.
\medskip

\noindent\underline{\textbf{Step 4:}} We prove (a) of Supplement \ref{PE}. Note that all indecomposable components of $E$ have the same slope is equivalent to saying that every nonzero direct summand of $E$ has the same slope of $E$, by an easy application of \cite[Theorem 3]{atiyah1956krull}. So by \cite[Theorem~2.4]{biswas2010moduli} and \cite[Section~3]{Infdefor1}, it suffices to show that $T_Q$ is decomposable if and only if the following short exact sequence splits:
\begin{equation}\label{split 1}
0\longrightarrow \varphi_*T_{\varphi}\longrightarrow \varphi_*T_Q\longrightarrow T_C\longrightarrow 0.
\end{equation}

Note that $Hom_C(T_C, \varphi_*T_Q)= Hom_Q(\varphi^*T_C, T_Q)$ by adjunction. So, splitting of \eqref{split 1} is equivalent to splitting of the short exact sequence
\begin{equation}\label{split 2}
0\longrightarrow T_{\varphi}\longrightarrow T_Q\longrightarrow \varphi^*T_C\longrightarrow 0.
\end{equation}

If \eqref{split 2} splits, clearly $T_Q$ is decomposable. Conversely, if $T_Q$ is decomposable, let $A,B,\overline A$ and $\overline B$ be as in Claim \ref{A+B}. We must have $\overline B=T_C$ and $\overline A=0$. Now Claim \ref{A+B}(iii) shows that \eqref{split 2} splits. This proves the equivalence, and also the decomposition of $T_Q$ as in the last statement. Finally, note that $\varphi^*T_C$ is a line bundle, hence simple, and $T_{\varphi}$ is simple by Lemma \ref{simple}.

\medskip

\noindent\underline{\textbf{Step 5:}} We prove that  $T_{Q'}$ is indecomposable in the notations of Theorem \ref{T_Q'}. This is very similar to Step 3. So let $C$ be an elliptic curve, and $S'=C^{((d))}$. Let $\varphi':Q'\longrightarrow S'$
be the Hilbert--Chow morphism, and $K'$ the kernel of the natural map $\varphi'_*T_{Q'}\longrightarrow T_{S'}.$ Let $Q_0'=Q_0\cap Q'$, and $\varphi_0':Q_0'\to S'$ the restriction of $\varphi'$. Also let $U'=U\cap S'$. By the same proof as for $Q\setminus Q_0$ has codimension $\geq 2$ in $Q$, we get $Q'\setminus Q_0'$ has codimension $\geq 2$ in $Q'$.

Similarly as in Step~1, we can define a $\mathbb C$-linear map
\[
\operatorname{Hom}_{S'}(T_{S'},K')
\xrightarrow{\,N'\,}
\operatorname{End}(T_{Q'})
\]
given by
\[
u\longmapsto N'_u,
\]
and satisfying
\[
N'_uN'_v=0
\qquad
\text{for all }
u,v\in\operatorname{Hom}_{S'}(T_{S'},K').
\]
In the same way we defined $r$, we can define a $\mathbb C$-algebra
homomorphism
\[
\bar r:\operatorname{End}(T_{Q'})\longrightarrow\operatorname{End}(T_{S'}).
\]

The same argument as in Claim \ref{Tf to Tf} shows any endomorphism of $T_{Q'}$ carries $T_{\varphi_0'}$ into itself. Let $\widetilde{U'}$ be the preimage of $U'$ under the quotient map $C^d \to C^{(d)}$. Note that $\widetilde{U'}$ is a dense open subset of the set of all $\underline{x}\in C^d$ whose coordinates sum to $0$ in $C$. This latter set is isomorphic to $C^{d-1}$ as any choice of first $d-1$ coordinates uniquely determine the last coordinate. This shows $\widetilde{U'}$ is integral. Now the same proof as in Claim \ref{no idempotent} shows that the ring $End(T_{\varphi_0'})$ has no nontrivial idempotent. Finally, the statement and proof of Claim \ref{A+B} is valid with $(Q, S, Q_0, \varphi,\varphi_0)$ replaced by $(Q', S', Q_0', \varphi',\varphi_0')$. As $S'\cong \mathbb{P}^{d-1}$, we have $T_{S'}$ is indecomposable, by Lemma \ref{simple}. Thus $T_{Q'}$ is indecomposable too.

\medskip

\noindent\underline{\textbf{Step 6:}} We complete proofs of all parts of Theorem \ref{A} except $(c)$. By Step 3, we can assume $T_Q$ is indecomposable. By Claim \ref{ker im}, we have
\[
\operatorname{End}(T_Q)=\mathbb{C}\cdot\operatorname{id}\oplus\operatorname{im}N,
\]
and $\operatorname{im}N$ is a square-zero ideal by \eqref{square zero}.

As $N$ is injective, we have $\operatorname{im}N\cong \operatorname{Hom}_S(T_S,K)$. By Lemma \ref{homtf}, and the isomorphism $K\cong (\operatorname{ad}E)^{[d]}$ as in \cite[Claim 4.2]{Torelli}
we have
\[
\operatorname{im}N
\cong H^0(C,\omega_C)\otimes H^0(C,\operatorname{ad}E).
\]
Note that $E$ is simple if and only if $H^0(C,\operatorname{ad}E)=0$ by \eqref{ad E}. Also note that when on an elliptic curve, any simple or more generally indecomposable vector bundle is semihomogeneous by \cite{atiyah1957vector}
Thus we obtain all parts of Theorem \ref{A} except $(c)$. The last statement of Theorem \ref{A} also follows.
\medskip

\noindent\underline{\textbf{Step 7:}} We prove (b) and (c) of Supplement \ref{PE}. If $T_Q$ is indecomposable, Claim \ref{ker im} shows
\[
\operatorname{End}(T_Q)=\mathbb{C}\cdot\operatorname{id}\oplus\operatorname{im}N,
\]
and $\operatorname{im}N$ is a square-zero ideal by \eqref{square zero}. As $N$ is injective, we have
\[
\operatorname{im}N
\cong \operatorname{Hom}_C(T_C,K)
\cong \operatorname{Hom}_C(T_C,\operatorname{ad}E)
\cong H^0(C,\omega_C\otimes\operatorname{ad}E),
\]
proving (b).

For the ``if'' part of (c), after a line bundle twist we can assume $E=\mathcal{O}_{\mathbb{P}^1}^{\oplus a}\oplus\mathcal{O}_{\mathbb{P}^1}(1)^{\oplus b}$ with $a,b>0$. So, $T_Q$ is indecomposable by $(a)$ of Supplement \ref{PE}. Also, $\operatorname{\mathcal End}(E)\cong E^*\otimes E$ is a direct sum of copies of $\mathcal{O}_{\mathbb{P}^1}(-1)$, $\mathcal{O}_{\mathbb{P}^1}$ and $\mathcal{O}_{\mathbb{P}^1}(1)$. Since $\operatorname{ad}(E)$ is a direct summand of $\operatorname{\mathcal End}(E)$, it is also such a direct sum, by \cite[Theorem 3]{atiyah1956krull}. As $\omega_{\mathbb{P}^1}\cong\mathcal{O}_{\mathbb{P}^1}(-2)$, we get $T_Q$ is simple by (b).

Now we show the ``only if'' part of (c). Suppose $T_Q$ is simple. So, $T_Q$ is indecomposable, hence (a) implies $E$ has at least two indecomposable components. In particular, $E$ is decomposable. So, $h^0(C,\operatorname{ad}E)\neq 0$. If $C\not\cong\mathbb{P}^1$, then $h^0(C,\omega_C)\neq0$, hence $h^0(C,\omega_C\otimes\operatorname{ad}E)\neq0$. Thus $C\cong\mathbb{P}^1$. Since not all indecomposable components of $E$ have the same slopes by (a), $E$ is not trivial up to line bundle twist.

Suppose $E$ is not balanced. So, $E_1:=\mathcal{O}_{\mathbb{P}^1}(a)\oplus\mathcal{O}_{\mathbb{P}^1}(a+b)$ is a direct summand of $E$ for some $b\geq 2$ and $a\in\mathbb{Z}$. So, $\operatorname{\mathcal End}(E_1)$ is a direct summand of $\operatorname{\mathcal End}(E)$. Note that $\mathcal{O}_{\mathbb{P}^1}(b)$ is a direct summand of $\operatorname{\mathcal End}(E_1)\cong E_1^*\otimes E_1$. Thus $\mathcal{O}_{\mathbb{P}^1}(b-2)$ is a direct summand of $\omega_C\otimes\operatorname{\mathcal End}(E)$, and as $b\geq2$, we have $h^0(C,\omega_C\otimes\operatorname{\mathcal End}(E))\neq0$. As $h^0(C,\omega_C)=0$ for $C\cong\mathbb{P}^1$, \eqref{ad E} shows $h^0(C,\omega_C\otimes\operatorname{ad}E)\neq0$.
Thus $T_Q$ is not simple by (b), a contradiction. So, $E$ is balanced.
\medskip

\noindent\underline{\textbf{Step 8:}}  We complete the proof of Theorem \ref{T_Q'}. We use notations of Step 5.

As $S'\cong \mathbb P^{d-1},$
we have $\operatorname{End}(T_{S'})=\mathbb C\cdot\operatorname{id}$
by Lemma \ref{simple}. So,
\[
\operatorname{End}(T_{Q'})
=
\mathbb C\cdot\operatorname{id}\oplus\ker\bar r.
\]

Now $T_{Q'}$ is indecomposable by Step 5. The same proof as of Claim \ref{ker im} (1), with $(T_Q,T_S, r)$ replaced by
$(T_{Q'},T_{S'}, \bar{r})$ everywhere, shows that
\[
\ker\bar r
\cong
\operatorname{Hom}_{S'}(T_{S'},K')
\]
and is a square-zero ideal.

As $K'\cong(\operatorname{ad}E)^{[[d]]}$
by \cite[Claim~4.2$'$]{Torelli}, Lemma \ref{homtf1} gives $\ker \bar r=0$
if $d\geq 3$. This proves (a).

Now suppose $d=2$. By Lemma \ref{homtf1},
\[
\ker\bar r
\cong
H^0\bigl(C,(\operatorname{ad}E)(-4\cdot O)\bigr).
\]
By \eqref{ad E}, this is the same as $H^0\bigl(C,\operatorname{\mathcal End}(E)(-4\cdot O)\bigr).$
Thus,
\[
\ker\bar r
\cong
H^0\bigl(C,E^*\otimes E\otimes\mathcal O_C(-4\cdot O)\bigr)
\cong
H^0\bigl(C,E(4\cdot O)^*\otimes E\bigr)
\cong
\operatorname{Hom}_C(E(4\cdot O),E).
\]
Since $T_{Q'}$ is indecomposable by Step~5, the proof of (b) is complete.

\noindent\underline{\textbf{Step 9:}}  We complete the proof of (c) of Theorem \ref{A}.

By \cite[Lemma 5.1]{Torelli}, we only need to show $T_a$ is simple.
By Lemma \ref{simple}, it suffices to show $T_{a^{-1}(x)}$
is simple for all $x\in C$.

Let $x\in C$. There is $y\in C$ with $dy=x.$
As $E$ is semihomogeneous, there is $\varphi\in\operatorname{Aut}^0(\mathbb P_C(E))$
mapping to $t_y$ under the natural map $\operatorname{Aut}^0(\mathbb P_C(E))
\longrightarrow
\operatorname{Aut}^0(C).$
By \eqref{times d}, we get $\psi\in\operatorname{Aut}^0(Q)$
mapping to $t_x$ under the natural map $\operatorname{Aut}^0(Q)\longrightarrow \operatorname{Aut}^0(C)$. Thus $a^{-1}(x)\cong a^{-1}(O)=Q',$
where $Q'=Q_d'(E,C).$
Thus it suffices to show $T_{Q'}$ is simple.

If $d\geq 3$, then we are done by Theorem \ref{T_Q'}(a).
Now suppose $d=2$. As $E$ is semihomogeneous, by
\cite[Proposition 6.13]{mukai1978semi},
$E$ is semistable. Thus $E(4\cdot O)$ is also semistable, and its slope is strictly
larger than the slope of $E$. So by
\cite[Proposition 1.2.7]{huybrechts2010geometry}, $\operatorname{Hom}_C(E(4\cdot O),E)=0.$
Now we are done by Theorem \ref{T_Q'}(b).
\qed
\begin{corollary}\label{indecomposable}
Let $E$ be a vector bundle of rank $\geq 2$ on a smooth projective curve $C$, and $d\geq 2$ an integer. Then $Q_d(E,C)$ is indecomposable.
\end{corollary}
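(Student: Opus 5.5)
Suppose for contradiction that $Q=Q_d(E,C)\cong X\times Y$ with $X,Y$ positive dimensional smooth projective varieties. Then $T_Q\cong p_X^*T_X\oplus p_Y^*T_Y$ is decomposable, with both summands nonzero. By Theorem \ref{A}, $T_Q$ is indecomposable in cases (a), (b), (d), (e). So the only remaining case is (c): $g=1$ and $E$ semihomogeneous. In that case $T_Q\cong\mathcal O_Q\oplus T_a$ with $T_a$ simple, hence indecomposable. By uniqueness of the indecomposable decomposition \cite[Theorem 3]{atiyah1956krull}, $p_X^*T_X$ and $p_Y^*T_Y$ are direct sums of the components $\mathcal O_Q$ and $T_a$. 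Thus, after swapping factors, $p_X^*T_X\cong\mathcal O_Q$ and $p_Y^*T_Y\cong T_a$. In particular $\dim X=1$ and $T_X$ is trivial, so $X$ is an elliptic curve.

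Next I identify $X$ with the Albanese direction. Since $\operatorname{Alb}(Q)\cong C$ is one-dimensional and $\operatorname{Alb}(X\times Y)=\operatorname{Alb}(X)\times\operatorname{Alb}(Y)=X\times\operatorname{Alb}(Y)$, we get $\operatorname{Alb}(Y)=0$. Hence $h^0(Y,\Omega_Y)=0$ and $H^1(Y,\mathcal O_Y)=0$.

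The key step is to show that the fibres of $a$ are the slices $\{x\}\times Y$. The composite $\{x\}\times Y\to Q\to C$ factors through $\operatorname{Alb}(Y)=0$, so it is constant. Therefore the Albanese map $a:X\times Y\to C$ factors through $X\to C$, which is an isogeny or isomorphism. The fibres of $a$ are connected, since $a$ is a contraction with $\mathcal{O}$ connected fibres. So $X\to C$ is an isomorphism, and each fibre $a^{-1}(x)\cong Y$. Step 9 shows $a^{-1}(x)\cong Q'=Q'_d(E,C)$, so $Y\cong Q'$ and $Q\cong C\times Q'$.

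It remains to derive a contradiction from $Q\cong C\times Q'$, and this is where I expect the main obstacle. My first attempt is via the automorphism group. A product $C\times Q'$ has $\operatorname{Aut}^0\supseteq C$ acting by translation on the first factor only, with trivial stabilizers of the action on fibres. For $Q$, the diagram \eqref{times d} shows that $\operatorname{Aut}^0(Q)\cong\operatorname{Aut}^0(\mathbb P_C(E))\to\operatorname{Aut}^0(C)$ is multiplication by $d$ on the base. So elements lying over $t_y$ with $dy=0$ act on each fibre $Q'$. A product decomposition would give a subgroup $C\subset\operatorname{Aut}^0(Q)$ mapping isomorphically to $\operatorname{Aut}^0(C)$ and acting trivially on the $Q'$ factor.

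A cleaner route, which I would try first, uses the Hilbert--Chow map. The morphism $\varphi:Q\to S=C^{(d)}$ is compatible with $a$ and $b$. Restricted to a fibre, it gives $Q'\to C^{((d))}\cong\mathbb P^{d-1}$. If $Q\cong C\times Q'$ over $C$, then the sum map $b:C^{(d)}\to C$ would be trivialized as $C\times\mathbb P^{d-1}$, since the map $\varphi$ is intrinsic to $Q$ via \cite{Torelli}. That is false: $C^{(d)}=\mathbb P(E_d)$ for an indecomposable (Atiyah) bundle $E_d$, which is not trivial up to twist. Making the "intrinsic" step rigorous is the crux. Concretely, the product structure gives a trivial family of fibres. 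Using \cite[Corollary B]{Torelli}, the Hilbert--Chow structure on each fibre is preserved by the $C$-action, which forces $C^{(d)}\to C$ to be a trivial $\mathbb P^{d-1}$-bundle, a contradiction.
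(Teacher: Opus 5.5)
Your proposal is correct. The reduction is the same as the paper's: Theorem \ref{A} forces case (c), Krull--Schmidt makes one factor an elliptic curve $X$ with $p_X^*T_X\cong\mathcal O_Q$, and $\operatorname{Alb}(Y)=0$ identifies $a$ with the projection $X\times Y\to X\cong C$. The identification $Y\cong Q'$ is not needed.

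For the final contradiction, you undersell your first attempt. It is precisely the paper's ending and is already complete. The translations $t_x\times\mathrm{id}_Y$ form a subgroup of $\operatorname{Aut}^0(Q)$ that $\eta$ maps isomorphically onto $\operatorname{Aut}^0(C)$, i.e.\ a section of $\eta$. Then \eqref{times d} produces a homomorphism $\sigma:C\to C$ with $d\cdot\sigma=\mathrm{id}$, which is impossible since $C[d]\neq 0$. The paper gets this section from Brion's formula $\operatorname{Aut}^0(C\times X_2)\cong\operatorname{Aut}^0(C)\times\operatorname{Aut}^0(X_2)$, but only your translation subgroup is actually used.

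Your Hilbert--Chow route is a valid alternative once spelled out:
\begin{enumerate}
\item[(1)] By \cite[Corollary B]{Torelli}, the translation action descends along $\varphi$ to an algebraic action on $C^{(d)}$.
\item[(2)] Since $a=b\circ\varphi$ with $\varphi$ surjective, $b$ intertwines this action with the simply transitive translation action on $C$.
\item[(3)] Hence $(c,D)\mapsto c\cdot D$ is an isomorphism $C\times C^{((d))}\cong C^{(d)}$ over $C$.
\item[(4)] This contradicts, e.g., $h^0(T_{C^{(d)}})=1$ (from the proof of Lemma \ref{end=c2}) versus $h^0(T_{C\times\mathbb P^{d-1}})=d^2$.
\end{enumerate}

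Compared with the paper, this route costs an extra input (nontriviality of the bundle $b$) and an explicit trivialization argument. It also contains the paper's argument anyway. The descended action factors through $\operatorname{Aut}^0(\mathbb P_C(E))\to\operatorname{Aut}^0(C)$, and the natural action of $\operatorname{Aut}^0(C)$ on $C^{(d)}$ covers multiplication by $d$ on the base. So covering a simply transitive action already means $\times d$ has a section, and you could stop there.
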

\begin{proof}
Suppose $Q:=Q_d(E,C)$ is decomposable. We want to get a contradiction. As $T_Q$ is decomposable, we get $C$ is an elliptic curve and $E$ is semihomogeneous, by Theorem \ref{A}.

Write $Q\cong X_1\times X_2$ with $\dim X_1,\dim X_2>0$, and let $p_i:Q\to X_i$ be the projections. So, $T_Q\cong p_1^*T_{X_1}\oplus p_2^*T_{X_2}$.
The decomposition of $T_Q$ as in Theorem \ref{A} (c) and the uniqueness part in \cite[Theorem 3]{atiyah1956krull} force $p_i^*T_{X_i}\cong\mathcal O_Q$ for some $i$, say $i=1$. Restricting to a section of $p_1$, we get $T_{X_1}\cong\mathcal O_{X_1}$, so $X_1$ is an elliptic curve. As formation of Albanese morphism commutes with taking products by \cite[Corollary 4.1.7(2)]{brion2015some}, \cite[Lemma 5.1]{Torelli} forces the Albanese variety of $X_2$ to be a point and hence $p_1$ is the Albanese map of $Q$. So, we can assume $C=X_1$.

As $\operatorname{Aut}^0(C)\times \operatorname{Aut}^0(X_2)
\cong \operatorname{Aut}^0(C\times X_2)$ by \cite[Corollary 2.3]{brion2010automorphism}, the natural map $\operatorname{Aut}^0(Q)\to \operatorname{Aut}^0(C)$ induced by $p_1$ has a section. Now \eqref{times d} shows the map $C\xrightarrow{\times d}C$ has a section, a contradiction as $d\geq2$.
\end{proof}

\section{Distinguishing products of Quot schemes}
We prove Theorem \ref{B} in this section.

\textit{Proof of Theorem \ref{B}:} By \cite[Theorem B and Supplement B]{Torelli}, the conclusion is equivalent to the assertion that $k=l$, and up to renumbering, we have $Q_{d_i}(E_i,C_i)\cong Q_{d_i'}(E_i',C_i')$ for each $i$. Since each factor is indecomposable by Corollary \ref{indecomposable}, in view of  \cite[Theorem~1.2]{horst1985decomposition} we can assume each $Q_{d_i}(E_i,C_i)$ and $Q_{d_j'}(E_j',C_j')$ are torsion bundles over tori.
For a torsion bundle $W$ over a torus with $\dim W>1$, we know that $T_W$ has $\mathcal O_W$ as a direct summand. Thus $T_W$ is decomposable. By Theorem \ref{A}, this implies each $C_i$ and $C_j'$ are elliptic curves, and $E_i$ and $E_j'$ are semihomogeneous.

As observed in Step~9 in the proof of Theorem \ref{A}, each fibre of the Albanese map of $Q_{d_i}(E_i,C_i)$ is isomorphic to $Q_{d_i}'(E_i,C_i)$. Since formation of Albanese morphism commutes with products by \cite[Corollary 4.1.7(2)]{brion2015some}, each fibre of the Albanese map of $\prod_i Q_{d_i}(E_i,C_i)$ is isomorphic to $\prod_i Q_{d_i}'(E_i,C_i)$.

This shows
\[
\prod_{i=1}^k Q_{d_i}'(E_i,C_i)\cong \prod_{j=1}^l Q_{d_j'}'(E_j',C_j').
\]
Now $Q_{d_i}'(E_i,C_i)$ has dimension $>1$ and indecomposable tangent bundle by Theorem \ref{T_Q'}, hence it is not a torsion bundle over a torus. So by \cite[Theorem~1.2]{horst1985decomposition}, we have $k=l$ and, up to renumbering, $Q_{d_i}'(E_i,C_i)\cong Q_{d_i'}'(E_i',C_i')$ for every $i$. Now we are done by \cite[Theorems B$'$ and Supplement B$'$]{Torelli}.
\qed

In fact, when none of $Q_{d_i}(E_i,C_i)$ or $Q_{d_j'}(E_j',C_j')$ is as in (c) of Theorem \ref{A}, any isomorphism between the products comes from isomorphisms of individual factors. Thus we can completely describe the automorphism group of such a product of Quot schemes, as automorphism groups of individual Quot schemes are known by \cite[Corollary B]{Torelli}. This is a consequence of the following general theorems. Strictly speaking, we do not need the implications among $(b),(c)$ and $(d)$ in Theorem \ref{abcd} for our purpose here, we only need the equivalence of $(a)$ and $(b)$ for the proof of Theorem \ref{Urata1}, and the implication $(a)\Rightarrow (d)$ for application to Quot schemes, but that was already observed in \S 2. Nevertheless, we include all the implications for completeness of understanding.

\begin{theorem}\label{abcd}
For a positive dimensional smooth projective variety $X$, consider the following statements:
\begin{enumerate}
\item[(a)] $X$ is a torsion bundle over a torus.
\item[(b)] $\operatorname{Aut}^{0}(X)$ contains a positive dimensional complete subvariety.
\item[(c)] $\operatorname{Aut}^{0}(X)$ is not affine.
\item[(d)]  $T_X$ has $\mathcal{O}_X$ as a direct summand.
\end{enumerate}

Then $(a)\Leftrightarrow(b)\Rightarrow(c)\Leftrightarrow(d)$.
\end{theorem}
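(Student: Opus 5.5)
Let $G=\operatorname{Aut}^0(X)$, a connected algebraic group with Lie algebra $H^0(X,T_X)$. I will prove the four implications $(a)\Rightarrow(b)$, $(b)\Rightarrow(a)$, $(b)\Rightarrow(c)$ and $(c)\Leftrightarrow(d)$ separately. The main tools are Chevalley's structure theorem and the Nishi--Matsumura theorem.

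\textbf{$(a)\Rightarrow(b)$.} This was already noted in \S 2. If $X=(A\times Y)/G_0$, then translation in the $A$-factor descends to $X$, so $\operatorname{Aut}^0(X)$ contains an image of the positive dimensional abelian variety $A$. The action of $A$ on $X$ has finite kernel, since $A$ acts freely on $A\times Y$ modulo the finite group $G_0$. So this image is positive dimensional and complete.

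\textbf{$(b)\Rightarrow(c)$.} An affine algebraic group cannot contain a positive dimensional complete subvariety.

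\textbf{$(c)\Rightarrow(b)$, needed only for completeness.} Chevalley's theorem gives an exact sequence $1\to L\to G\to B\to 1$ with $L$ affine and $B$ an abelian variety. Since $G$ is not affine, $B\neq 0$. By a theorem of Rosenlicht and Brion, $G=L\cdot A$ for some abelian subvariety $A\subset G$ surjecting onto $B$. So $A$ is a positive dimensional complete subvariety of $G$.

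\textbf{$(b)\Rightarrow(a)$.} Any positive dimensional complete connected subgroup variety of $G$ contains an abelian variety, so we may take an abelian variety $A\subset G$ acting faithfully on $X$. By Nishi--Matsumura, the stabilizers are finite and the orbit maps $A\to X$ are finite. Via the Albanese map this gives an isogeny from $A$ onto an abelian subvariety $A'$ of $\operatorname{Alb}(X)$, with $\alpha(a\cdot x)=\alpha(x)+\bar a$. Composing $\alpha$ with a projection $\operatorname{Alb}(X)\to A'$ (Poincaré reducibility) gives an $A$-equivariant map $X\to A'$. Let $Y$ be a fibre. The action map $A\times Y\to X$ is surjective and étale, and it is the quotient by the finite group $\ker(A\to A')$ acting diagonally. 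This exhibits $X$ as a torsion bundle over a torus. This is the step I expect to be the main obstacle: verifying that $A\times_{A'}$-descent gives $X\cong(A\times Y)/\ker$. I would first try the equivariance argument, as in Brion's treatment of actions of abelian varieties.

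\textbf{$(c)\Leftrightarrow(d)$.} This is the other delicate step. For $(c)\Rightarrow(d)$, choose $A$ as above. Its Lie algebra gives $\mathcal O_X\otimes\operatorname{Lie}A\to T_X$, injective on fibres because stabilizers are finite. Composing with $d\alpha'$ for $\alpha':X\to A'$ gives $\mathcal O_X\otimes\operatorname{Lie}A\to \alpha'^*T_{A'}\cong\mathcal O_X\otimes\operatorname{Lie}A'$, which is an isomorphism since $A\to A'$ is an isogeny. So the first map splits, and in particular $\mathcal O_X$ is a direct summand. For $(d)\Rightarrow(c)$, suppose $\mathcal O_X\xrightarrow{s}T_X\xrightarrow{t}\mathcal O_X$ has $t\circ s=\mathrm{id}$. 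Then $t$ is a nowhere vanishing section of $\Omega_X$, hence a nonzero holomorphic $1$-form $\omega$, and $v=s(1)$ is a vector field with $\omega(v)=1$. If $G$ were affine, then $v$ would lie in the Lie algebra of a linear algebraic group acting on $X$. I would then use that holomorphic $1$-forms vanish on vector fields coming from linear groups, so $\omega(v)=0$, a contradiction. To see this vanishing: $\omega(v)$ is constant, and $\omega$ is pulled back from $\operatorname{Alb}(X)$. Orbits of $G$ map to points in $\operatorname{Alb}(X)$, because every morphism from a linear (rational) group variety to an abelian variety is constant. Hence $\omega(v)=0$.
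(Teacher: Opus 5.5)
There is a genuine gap. It starts in the step you call $(c)\Rightarrow(b)$ and carries over into your $(c)\Rightarrow(d)$.

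The assertion that every connected algebraic group can be written as $G=L\cdot A$, with $L$ affine and $A$ an abelian subvariety, is false over $\mathbb{C}$. Rosenlicht's decomposition is $G=G_{\mathrm{aff}}\cdot G_{\mathrm{ant}}$ with $G_{\mathrm{ant}}$ anti-affine, and in characteristic $0$ anti-affine groups need not be complete. For instance, take the universal vector extension $0\to\mathbb{G}_a\to E^\natural\to E\to 0$ of an elliptic curve. It is non-affine, yet it contains no positive dimensional complete subvariety. Such a subvariety, translated through the identity, would generate an abelian subvariety mapping isogenously onto $E$. The nonzero extension class in $H^1(E,\mathcal{O}_E)$ would then vanish after pullback by an isogeny, which cannot happen in characteristic $0$. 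This is why the theorem only asserts $(b)\Rightarrow(c)$; the paper's closing remark (citing Fong) says $(c)\Rightarrow(b)$ genuinely fails for $\operatorname{Aut}^0$ of suitable smooth projective varieties. So when you write ``for $(c)\Rightarrow(d)$, choose $A$ as above'', no such $A$ is available under hypothesis (c) alone. Your splitting argument is correct once an abelian subvariety $A\subset G$ exists, but it proves only $(b)\Rightarrow(d)$.

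The missing idea, which is how the paper argues, is to work directly with the Chevalley quotient $B=G/G_{\mathrm{aff}}$ rather than lifting it to a subgroup. The action gives a homomorphism $G\to\operatorname{Alb}(X)$, $g\mapsto\alpha(gx)-\alpha(x)$. It kills $G_{\mathrm{aff}}$ by your own rationality argument, and by Nishi--Matsumura (the paper cites Popa--Schnell, Theorem~2.11) the induced map $B\to\operatorname{Alb}(X)$ has finite kernel. Since $G\to B$ is surjective on Lie algebras and $B\to\operatorname{Alb}(X)$ is injective on Lie algebras, $B\neq 0$ forces $\tau\colon H^0(X,T_X)=\operatorname{Lie}G\to\operatorname{Lie}\operatorname{Alb}(X)=H^0(X,\Omega_X)^*$ to be nonzero. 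That is, there are $s\in H^0(X,T_X)$ and $t\in H^0(X,\Omega_X)$ with $t(s)$ a nonzero constant, and then $s\colon\mathcal{O}_X\to T_X$ and $t\colon T_X\to\mathcal{O}_X$ split off $\mathcal{O}_X$.

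Your $(d)\Rightarrow(c)$ is exactly the other half of this computation and is correct, as are $(a)\Rightarrow(b)$ and $(b)\Rightarrow(c)$. Your $(b)\Rightarrow(a)$ is a correct, more explicit substitute for the paper's citation of Brion. The descent works: $X\times_{A'}A\cong A\times Y$ via $(x,a)\mapsto(a,a^{-1}x)$, so $X\cong(A\times Y)/\ker(A\to A')$. You should, however, say how a complete subvariety gives an abelian subgroup: translate it through the identity and take the subgroup it generates, which is complete and connected, hence an abelian variety.
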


\begin{proof}
$(a)\Rightarrow(b)$ was observed in \S 2. $(b)\Rightarrow(c)$ is trivial.  For $(b)\Rightarrow(a)$,  note that if $\operatorname{Aut}^{0}(X)$ contains a positive dimensional complete subvariety $V$, we can assume $V$ contains the identity by translations. The subgroup generated by $V$ is complete, hence is a positive dimensional abelian variety $A$. So, $A$ acts faithfully on $X$. Now as observed in \cite[Introduction]{brion2009some}, there is an $A[n]$-stable subscheme $Y$ of $X$ for some positive integer $n$, such that $X\cong A\times^{A[n]}Y$. This implies $X$ is a torsion bundle over a torus. 

Now we prove $(c)\Leftrightarrow(d)$. Let $G=\operatorname{Aut}^{0}(X)$, and $B$ the quotient of $G$ by its unique maximal connected affine subgroup. So $B$ is an abelian variety, and it is nonzero if and only if $(c)$ holds. Let $A$ be the Albanese variety of $X$. By \cite[Theorem~2.11]{Popa2010DerivedBGG}, we have a homomorphism $\eta:B\to A$ with finite kernel, hence $\eta$ induces an injection of Lie algebras. As the quotient $G\to B$ induces a surjection of Lie algebras, the map $\operatorname{Lie}(G)\xlongrightarrow{\tau} \operatorname{Lie}(A)$
is nonzero if and only if $B$ is nonzero, where $\operatorname{Lie}(H)$ denotes the Lie algebra of an algebraic group $H$. Identifying
$\operatorname{Lie}(G)=H^0(X,T_X)$ and
$\operatorname{Lie}(A)=H^0(X,\Omega_X)^*$, we see that $\tau$ is nonzero if and only if there are
$s\in H^0(X,T_X)$ and $t\in H^0(X,\Omega_X)$ such that $t(s)$ is a nonzero constant. Regarding $s$ as a map $\mathcal{O}_X\to T_X$ and $t$ as a map $T_X\to\mathcal{O}_X$, we see that the last assertion is equivalent to $(d)$. This proves the equivalence.
\end{proof}
\begin{theorem}\label{Urata1}
Let $X_1,\ldots,X_m,Y_1,\ldots,Y_n$ be indecomposable smooth projective varieties, none of them is a torsion bundle over a torus. Then the following holds:

If $\varphi:\prod_i X_i\longrightarrow\prod_j Y_j$ is an isomorphism, then $m=n$, and there is $\sigma\in S_n$ and isomorphisms $X_i\longrightarrow Y_{\sigma(i)}$ inducing $\varphi$.
\end{theorem}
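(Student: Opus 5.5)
The plan is to reduce to a known decomposition result for automorphisms of products. The relevant reference is Urata's theorem (and its refinement in Horst's work \cite{horst1985decomposition}). It says that if $X=\prod_i X_i$ with the $X_i$ indecomposable and \emph{no} $X_i$ is a torsion bundle over a torus, then the decomposition of $X$ into indecomposable factors is unique. The finer form of that uniqueness says more: any isomorphism of such products is a product of isomorphisms of factors, up to a permutation. The hypothesis of Urata's original statement is phrased as ``$\operatorname{Aut}^0(X_i)$ contains no positive dimensional complete subvariety'' (equivalently, no abelian variety acts nontrivially). By the equivalence $(a)\Leftrightarrow(b)$ of Theorem \ref{abcd}, this is exactly our hypothesis that no $X_i$, $Y_j$ is a torsion bundle over a torus. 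So the first step is to translate the hypothesis via Theorem \ref{abcd}, and then invoke Urata's result.

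If one wants a self-contained argument rather than a citation, I would proceed as follows. Set $X=\prod X_i$, and compose $\varphi$ with the projection $q_j:\prod Y_k\to Y_j$. For a fixed $j$, consider how $q_j\circ\varphi$ depends on each factor $X_i$. Fix points in the other factors to obtain a family of morphisms $X_i\to Y_j$ parametrized by $\prod_{k\ne i}X_k$. The rigidity step is to show that each such family is either constant in the parameter, or the dependence on $X_i$ is constant. For this I would use the fact that a connected family of morphisms $X_i\to Y_j$ lies in a component of $\operatorname{Mor}$. Such families are governed by $\operatorname{Aut}^0$-type data together with the Albanese map. Because the relevant connected groups are linear (by $(b)\Leftrightarrow(c)$-type reasoning), a morphism from a projective variety into them must be constant. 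From this one shows that $q_j\circ\varphi$ factors through the projection onto $\prod_{i\in I_j}X_i$ for subsets $I_j$. These subsets are disjoint and cover $\{1,\dots,m\}$, by counting dimensions and using that $\varphi$ is an isomorphism.

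The induced isomorphisms are then $\prod_{i\in I_j}X_i\cong Y_j$. Indecomposability of $Y_j$ forces $|I_j|=1$, and the symmetric argument with $\varphi^{-1}$ gives $m=n$ and a permutation $\sigma$. Finally, $\varphi$ equals the product of the induced isomorphisms $X_i\to Y_{\sigma(i)}$, since it agrees with them after every projection.

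The main obstacle is the rigidity step: showing that a morphism $X_1\times X_2\to Y$ into an indecomposable $Y$ depends on only one factor, given the absence of abelian-variety actions. This is precisely the content of Urata's theorem, and I would cite it rather than reprove it. Once the hypothesis has been matched through Theorem \ref{abcd}, the proof is just this translation together with that citation.
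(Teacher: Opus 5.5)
Your overall reduction matches the paper's: translate the hypothesis through (a)$\Leftrightarrow$(b) of Theorem \ref{abcd}, then appeal to Urata. But the step that carries the weight is a citation that does not apply as stated, and you are missing the ingredient the paper supplies.

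\textbf{The missing ingredient.} The paper does not use Urata's Theorem~1 as a black box with your hypothesis. It says that Urata's \emph{proof} goes through once one has the following rigidity property, for every partial product $X'=\prod_{i\in I}X_i$ and likewise for every partial product of the $Y_j$: if $Z$ is projective and $\phi\colon Z\times X'\to X'$ has $\phi(z_0,\cdot)$ an isomorphism, then $\phi(z,\cdot)=\phi(z_0,\cdot)$ for all $z\in Z$.

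Your translation only controls $\operatorname{Aut}^0(X_i)$ for single factors. To get the property for $X'$, the paper uses $\operatorname{Aut}^0(X')\cong\prod_{i\in I}\operatorname{Aut}^0(X_i)$ \cite[Corollary 2.3]{brion2010automorphism}. A positive-dimensional complete subvariety of $\operatorname{Aut}^0(X')$ would then have positive-dimensional complete image in some $\operatorname{Aut}^0(X_i)$, which (a)$\Leftrightarrow$(b) excludes. The rigidity follows because the resulting map from $Z$ to $\operatorname{Aut}^0(X')$ has complete connected image. Put differently, you must know that a product of factors none of which is a torsion bundle over a torus is again not one. This is exactly the step your proposal skips, and the paper gets it from the product formula for $\operatorname{Aut}^0$.

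\textbf{Horst's result.} The ``finer form'' you attribute to \cite{horst1985decomposition} is not what that result provides. In the proof of Theorem \ref{B} it is used only to match factors up to isomorphism. The assertion that $\varphi$ itself splits as a product of factor isomorphisms is precisely what is being proved here, so it cannot be cited.

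\textbf{Errors in your self-contained sketch.} First, the claim that the relevant groups are linear is false in general. Theorem \ref{abcd} gives only (b)$\Rightarrow$(c), and the remark after it says (c)$\Rightarrow$(b) fails. So $\operatorname{Aut}^0(X_i)$ can be non-affine even when $X_i$ is not a torsion bundle over a torus. The conclusion you want, that morphisms from projective varieties into $\operatorname{Aut}^0$ are constant, follows directly from the absence of positive-dimensional complete subvarieties, not from linearity.

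Second, rigidity of this kind controls families of \emph{automorphisms} of a fixed variety. It says nothing about arbitrary families of morphisms $X_i\to Y_j$, which are not governed by $\operatorname{Aut}^0$ or the Albanese map. That is why the input the argument actually needs is rigidity for automorphisms of partial products of the $X_i$ and of the $Y_j$.
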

\begin{proof}
If $X'$ is a product of some of the $X_i$'s, then by \cite[Corollary 2.3]{brion2010automorphism} and the equivalence of $(a)$ and $(b)$ in Theorem \ref{abcd}, we see that $\operatorname{Aut}^{0}(X')$ cannot contain a positive-dimensional complete subvariety. Hence the following holds: given a projective variety $Z$ and a morphism $\phi:Z\times X'\to X'$, if $\phi(z_0,\cdot):X'\to X'$ is an isomorphism for some $z_0\in Z$, then $\phi(z,\cdot)=\phi(z_0,\cdot)$ on $X'$ for all $z\in Z$.

 A similar statement holds for products of some of the $Y_j$'s.
Now one can check that the same proof as in \cite[Theorem 1]{urata1981holomorphic} proves the result.
\end{proof}
\begin{remark}
    The implication $(c)\Rightarrow(b)$ in Theorem \ref{abcd} is false, as \cite[Lemma 2.12]{fong2026automorphism} shows.
\end{remark}
\printbibliography
\vspace{30pt}
\begin{flushleft}
{\scshape Department of Mathematics, Shiv Nadar University, NH91, Tehsil
Dadri, Greater Noida, Uttar Pradesh 201314, India}.

{\fontfamily{cmtt}\selectfont
\textit{Email address: ashima.bansal@snu.edu.in} }

\vspace{10pt}

{\scshape Department of Mathematics, Fine Hall, Princeton University, Princeton, NJ 08540, USA}.

{\fontfamily{cmtt}\selectfont
\textit{Email address: ss6663@princeton.edu} }

{\fontfamily{cmtt}\selectfont
\textit{Email address: shivamvatsaaa@gmail.com} }

\end{flushleft}
\end{document}